\title[Rational curves on primitive symplectic varieties of OG\(_6^s\)-type]{Rational curves on primitive symplectic varieties of OG\(_6^s\)-type}
\author{Valeria Bertini}
\address{Mathematics Center of the University of Porto, Portugal}
\email{valeriabertini90@gmail.com}
\author{Annalisa Grossi}
\address{Fakult\"at f\"ur Mathematik - TU-Chemnitz, Deutschland}
\email{annalisa.grossi@math.tu-chemnitz.de}
\subjclass[2020]{%
	14C99 
	(%
	14J40 
	)}
\keywords{Primitive symplectic varieties; O'Grady's singluar six dimensional moduli space; rational curves}
\thanks{}
\begin{document}
	
	\maketitle
	
	\begin{abstract}
	We prove that any ample class on a primitive symplectic variety that is locally trivial deformation of O'Grady's singular 6-dimensional example is proportional to the first Chern class of a uniruled divisor. This result answers a question of Lehn--Mongardi--Pacienza \cite[Remark 4.7]{LMP}, extending their result \cite[Theorem 1.3]{LMP} for primitive symplectic varieties of this deformation type.
		
	\end{abstract}

\tableofcontents
\addtocontents{toc}{\protect\setcounter{tocdepth}{1}}

\section{Introduction}\label{section introduction}
\subsection{Background}\label{subsection background}
An \textit{irreducible holomorphic symplectic manifold}, in short \textit{ihs manifold}, is a compact complex K\"ahler manifold \(X\) that is simply connected and such that \(\HH^0(X,\Omega_X^{ 2})\) is generated by an everywhere non-degenerate holomorphic \(2\)-form, called the symplectic form of \(X\).
Ihs manifolds of dimension \(2\) are \(\K3\) surfaces, and Beauville \cite{beauville.varietes} exhibited two examples in each even complex dimension: Hilbert schemes of zero dimensional subschemes of length \(n\) on a \(\K3\) surface and generalized Kummer manifolds, whose deformation types are called \(\K3^{[n]}\)-type and \(\K_n(A)\)-type respectively.
O'Grady \cite{OG6,OG10} gave two further examples in dimension six and ten. Ihs manifolds deformation equivalent to the O'Grady's examples are called of \(\OG_6\)-type and \(\OG_{10}\)-type respectively. 

In recent years, \textit{singular} symplectic varieties have been object of great interest and their theory has been very much explored and developed; we refer to  \autoref{subsection psv and O'Grady's ex} for the definitions in the singular setting. As for the smooth case, singular symplectic varieties appear as factors of normal K\"ahler spaces with mild singularties and trivial first Chern class, giving  a singular version of the Beauville-Bogomolov decomposition theorem \cite{greb2016singular} \cite{horing2019algebraic} \cite{BGL22}. Ways to produce singular symplectic varieties are to consider quotients of smooth ihs manifolds by a finite group of symplectic automorphisms \cite[Proposition 2.4]{beauville.symplectic}, or to consider singular moduli spaces of sheaves on surfaces with trivial canonical bundle \cite[Theorem 1.19]{PR18}.

The class of singular symplectic varieties that we consider throughout the paper are \textit{primitive symplectic varieties}, see \autoref{def sing sympl}. The notion of primitive symplectic variety generalizes the definition of ihs manifold to the singular setting, and in fact  \textit{smooth} primitive symplectic variety do coincide with  ihs manifold, \cite[Lemma 3.3]{BL18} and \cite[Theorem 1]{schwald20}. The theory of primitive symplectic varieties behaves similarly to the one of ihs manifolds in many relevant aspects, as the lattice structure on second integral cohomology group (see \autoref{thm BBF and Fujiki}), their deformation theory and Torelli type results \cite{BL18}.

\subsection{Rational curves on primitive symplectic varieties} In this paper we focus on another interesting property shared by ihs manifolds and primitive symplectic varieties: the existence of rational curves on them, and more precisely, the existence of ample uniruled divisors (see \autoref{def uniruled}).  This problem has been firstly investigated in the smooth setting, where the existence of ample uniruled divisors on an ihs manifold allows to build a canonical subgroup of the 0-Chow group of the ihs manifold \cite[Theorem 1.5]{CPM}, giving a first evidence of Voisin's geometrical realization of the conjectural Bloch-Beilinson filtration of the 0-Chow group \cite{voisin.remarks}. Uniruled divisors are known to exist in almost any ample linear system on ihs manifolds of \(\K3^{[n]}\)-type \cite{CPM} and of \(\K_n(A)\)-type \cite{mongardi.pacienza} \cite{mongardi2019corrigendum}, and in any ample linear system with some fixed numerical invariants for ihs manifolds of \(\OG_{10}\)-type \cite{bertini2021rational}. The key ingredient in all these cases is a result by Charles-Mongardi-Pacienza \cite{CPM} about deformations of rational curves ruling a divisor on an ihs manifold. This result allows to pass from the construction of some explicit examples of ample uniruled divisors to the existence of uniruled divisors in ample classes which are deformation of the ones explicitly constructed.

\subsection{Main results}\label{subsection motivation and results}

Lehn-Mongardi-Pacienza recently extended the Charles-Mongardi-Pacienza  deformation result to the singular setting, i.e. for primitive symplectic varieties \cite[Theorem 1.1]{LMP}, and they started the search of ample uniruled divisors on them. In this paper we focus on primitive symplectic varieties that are locally trivial deformation of the singular 6-dimensional example of O'Grady, that we call of \(\OG_6^s\)-type, see \autoref{def OG6 K_v}.
We prove that any ample class on a primitive symplectic variety of \(\OG_6^s\)-type is proportional to the first Chern class of a uniruled divisor.

\begin{theorem}\label{main thm}
Let \((X,h)\) be a polarized primitive symplectic variety of \(\OG_6^s\)-type. Then there exists a positive integer \(m\) such that \(mh\) is the first Chern class of a uniruled divisor. 
\end{theorem}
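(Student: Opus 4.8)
The plan is to reduce \autoref{main thm} to finitely many explicit ``base cases'' by means of the deformation principle for uniruled divisors on primitive symplectic varieties \cite[Theorem 1.1]{LMP}, and to settle those base cases using the modular description of \(\OG_6^s\)-type varieties together with the result already available in \cite[Theorem 1.3]{LMP}.

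\emph{Step 1: reduction via deformations and monodromy.} By \cite{BL18}, \(\OG_6^s\)-type varieties have a well-behaved locally trivial deformation theory: the local period map for locally trivial deformations is an isomorphism, the global period map is surjective, and the polarized locally trivial deformation type of a pair \((X,h)\) is determined by the orbit of \(h\) under the locally trivial monodromy group \(\mathrm{Mon}^2\) acting on the BBF lattice \(\Lambda\) of \(\OG_6^s\)-type (see also \autoref{thm BBF and Fujiki}). Since \autoref{main thm} only asserts something about a multiple of \(h\), and since some multiple of \(h\) is the first Chern class of a (possibly non-reduced) uniruled divisor as soon as some multiple of the primitive class \(h_0\) underlying \(h\) is — replace a uniruled divisor \(D\) of class \(b h_0\) by the non-reduced divisor \(aD\) of class \(ab\, h_0\), where \(h = a h_0\) — we may assume \(h = h_0\) is primitive. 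Now if \((X',h')\) is any polarized \(\OG_6^s\)-type variety with \(h'\) in the \(\mathrm{Mon}^2\)-orbit of \(h\) and on which \(m'h'\) is the first Chern class of a uniruled divisor, then, connecting \((X',h')\) to \((X,h)\) by a path inside the Hodge locus of the class — available by surjectivity of the period map — and applying \cite[Theorem 1.1]{LMP} over a chain of charts covering this path, one concludes that \(m'h\) is the first Chern class of a uniruled divisor on \(X\). Hence it suffices to treat, for each \(\mathrm{Mon}^2\)-orbit of primitive ample classes, one well-chosen polarized \(\OG_6^s\)-type variety.

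\emph{Step 2: the orbits.} The lattice \(\Lambda\) of \(\OG_6^s\)-type contains \(U^{\oplus 2}\) (in fact \(U^{\oplus 3}\)) as an orthogonal direct summand and has a small discriminant group; by Eichler's criterion, the \(O^+(\Lambda)\)-orbit of a primitive class \(v\) with \(q(v)>0\) depends only on \(q(v)\) and on the class of \(v/\mathrm{div}(v)\) in \(\Lambda^\vee/\Lambda\), and, \(\mathrm{Mon}^2\) being of finite index in \(O^+(\Lambda)\) by its known description, there are in any case only finitely many \(\mathrm{Mon}^2\)-orbits of primitive ample classes of each given square \(q(h)=2k\), distinguished by the divisibility of \(h\). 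Moreover, by surjectivity of the period map, each such orbit actually occurs as the polarization of some \(\OG_6^s\)-type variety: choose a period orthogonal to the class and in the connected component for which the class is ample, which is possible precisely because the class has positive square (cf.\ \autoref{thm BBF and Fujiki}). Thus the problem reduces to a bounded number of families of base cases, uniformly indexed by \(k\) and the divisibility.

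\emph{Step 3: the base cases, and the main obstacle.} For (at least) one orbit the required uniruled divisor is provided by \cite[Theorem 1.3]{LMP}. For the remaining orbit(s) I would work with the modular incarnation of \(\OG_6^s\)-type varieties (see \autoref{def OG6 K_v}): up to locally trivial deformation they are the singular O'Grady moduli spaces \(K_v(A)\) of semistable sheaves with Mukai vector \(v = 2w\), \(w\) primitive of square \(2\), on a projective abelian surface \(A\). Taking \(A\) of Picard rank at least two, one realises various polarizations on \(K_v(A)\) through the Mukai homomorphism, and one produces a prime divisor swept out by a family of irreducible rational curves — for instance curves coming from a small birational contraction of \(K_v(A)\), or from elementary modifications of sheaves supported at a fixed point of \(A\) — and then computes, uniformly in \(k\), the BBF class and divisibility of this divisor, matching it with the missing orbit. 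I expect this to be the hard part, for two reasons: one must check that the divisor is genuinely uniruled by \emph{irreducible} rational curves, so that \cite[Theorem 1.1]{LMP} applies to the covering curve class; and one must verify that, along the connecting deformation of Step~1, this curve class remains of the minimal/controlled type required by that theorem — which may force a separate analysis for small values of \(q(h)\) and a careful verification of the hypotheses of \cite[Theorem 1.1]{LMP} in those cases. Granting this, \autoref{main thm} follows from Step~1.
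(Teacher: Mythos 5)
Your outer skeleton (reduce via \cite[Theorem 1.1]{LMP} and lattice-theoretic monodromy arguments to explicit base cases on moduli spaces) is indeed the paper's strategy, but the proposal leaves unproven exactly the two ingredients that constitute the paper's actual content, and your Step 3 only gestures at them. First, the paper's base case is \autoref{thm_positive}: on an \(\OG_6^s\) moduli space \(\K_v\), \emph{every} positive effective class has a multiple containing an irreducible uniruled divisor. This is not obtained from curves on small contractions or elementary modifications, as you suggest, but by pulling the class back along the Mongardi--Rapagnetta--Sacc\`a generically \(2{:}1\) \emph{regular} morphism \(\Phi_v\colon\underline Y_v\to\K_v\) from a \(\K3^{[3]}\)-type ihs manifold \cite{mong.sac.rapagn}, checking via the Fujiki relation that positivity is preserved (\autoref{prop Phi* on the lattices}), invoking Charles--Mongardi--Pacienza \cite{CPM} on the \(\K3^{[3]}\) side (existence plus their deformation result to return to the original class), and pushing the irreducible uniruled divisor forward along \(\Phi_v\). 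Without this (or an equally effective substitute) your ``hard part'' is the whole theorem; note also that \cite[Theorem 1.3]{LMP} only produces \emph{infinitely many uncharacterized} components, so it cannot be used to dispose of ``at least one orbit'' per square in any controlled way. Second, one must show that every pair (square, divisibility) of a polarization is realized by an (up to the factor \(2\)) effective class on some \(\K_v\): the paper does this in \autoref{lemma_ type_d} and \autoref{prop invariants of classes on Kv}, where divisibility \(1\) comes from the image of a degree-\(2d\) polarization of the abelian surface under \(\HH^2(A,\ZZ)\hookrightarrow\HH^2(\K_{(2,0,-2)}(A),\ZZ)\), and divisibility \(2\) forces \(d\equiv 3\bmod 4\) and is realized by the explicit classes \(\alpha_n=(n+1)h_A+nb\), with \(b\) half the class of twice the divisor of non-locally-free sheaves. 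Your proposal defers this computation entirely.

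There is also a lattice-theoretic slip in your Steps 1--2: you only assume \(\Mon^2\) has \emph{finite index} in \(O^+\) of the lattice. Eichler's criterion controls orbits under the group generated by transvections, so if \(\Mon^2\) were a proper finite-index subgroup, knowing the square and divisibility of your constructed class would \emph{not} identify its \(\Mon^2\)-orbit, and the matching at the end of Step 3 would break down (you would have finitely many orbits per square, but no way to tell which one you hit). The argument really needs the maximality \(\Mon^2(X)=O^+(\HH^2(X,\ZZ))\) for \(\OG_6^s\)-type, proved in \cite[Proposition 4.12]{mongardi.rapagn.monOG6} and used in \autoref{cor monodromy}; with that, square and divisibility do determine the connected component of \(\mathfrak M_{\OG_6^s}\), and the rest of your reduction is in line with the paper's proof.
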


Let \(\mathfrak M_{\OG_6^s}\) be the moduli space of polarized primitive symplectic varieties of \(\OG_6^s\)-type, see \autoref{def mod space of polarized psv}. Lehn-Mongardi-Pacienza prove in \cite[Theorem 1.3]{LMP}  that \(\mathfrak M_{\OG_6^s}\) has infinitely many connected components whose points have polarization that is proportional to the first Chern class of a uniruled divisor, but they can not give any characterization of these components. With \autoref{main thm} we conclude that every connected component of \(\mathfrak M_{\OG_6^s}\) has this property, answering their question \cite[Remark 4.7]{LMP}.

\autoref{main thm} is based on the existence of positive effective irreducible divisors on some specific \(\OG_6^s\)-type primitive symplectic varieties, called \(\OG_6^s\) moduli spaces, as stated in \autoref{thm_positive}. The conclusion follows by deforming the rational curves ruling these positive effective irreducible divisors via \cite[Theorem 1.3]{LMP}, and any polarized primitive symplectic variety of \(\OG_6^s\)-type is reached in this way: two of such varieties are one locally trivial polarized deformation of the other exactly when their polarizations have the same square and the same divisibility (see \autoref{subsection monodromy_OG6_eichler}), and all possible squares and divisibilities arise on \(\OG_6^s\) moduli spaces by \autoref{prop invariants of classes on Kv}.

\subsection{Structure of the paper} This paper is organized as follows. In \autoref{section preliminaries} we introduce the notion of primitive symplectic varieties and the \(\OG^s_6\) deformation class, together with few fundamental results about them. In \autoref{subsection MRS model} we recall the definition of a certain regular morphism from a \(\K3^{[3]}\)-type ihs manifold to a \(\OG_6^s\)-type primitive symplectic variety introduced by Mongardi--Rapagnetta--Saccà, that we use in \autoref{subsection positive uniruled on K} to conclude the existence of uniruled divisors in any positive effective class on a \(\OG_6^s\) moduli space, see \autoref{thm_positive}. In \autoref{subsection positive uniruled on K} we introduce the notion of polarized moduli space of \(\OG_6^s\)-type primitive symplectic varieties \(\mathfrak M_{\OG_6^s}\). In \autoref{section.oribts} we pass to any polarized \(\OG_6^s\)-type primitive symplectic variety in \(\mathfrak M_{\OG_6^s}\): as first in \autoref{subsection monodromy_OG6_eichler} we recall that the connected components of \(\mathfrak M_{\OG^s_6}\) are characterized by degree and divisibility of the polarization of an element, then in \autoref{subsection main results} we exhibit a positive effective class on a \(\OG_6^s\) moduli space for all possible squares and divisibilities, see \autoref{lemma_ type_d} and \autoref{prop invariants of classes on Kv}, and we prove \autoref{main thm}. We devote \autoref{section smooth case} to some considerations on the smooth case, i.e. ihs manifolds of \(\OG_6\)-type.


\subsection*{Acknowledgements} 
We wish to thank Christian Lehn, Giovanni Mongardi, Claudio Onorati and Gianluca Pacienza for their hints and suggestions.

\section{Preliminaries}\label{section preliminaries} 

\subsection{Primitive symplectic varieties and O'Grady's example}\label{subsection psv and O'Grady's ex} We start recalling the basics definitions in the smooth setting.

\begin{definition}[Holomorphic symplectic manifolds - smooth setting] Let \(X\) be a complex manifold.
\begin{enumerate}
    \item A form \(\sigma\in \HH^0(X,\Omega_X^2)\) is called \textit{holomorphic symplectic} if it is everywhere non-degenerate on \(X\).
    \item \(X\) is a \textit{holomorphic symplectic manifold} if there exists a holomorphic symplectic form on \(X\).
    \item \(X\) is an \textit{irreducible holomorphic symplectic} (in short, \textit{ihs}) manifold if it is a simply connected compact K\"ahler holomorphic symplectic manifold such that \(h^0(X,\Omega^2_X)=1\).
\end{enumerate}
\end{definition}

In the singular setting there are several notions of symplectic varieties, whose nomenclature is not uniform in literature; for our definitions we refer to \cite[\S 3]{BL18} and \cite[\S 1.1, \S 1.2]{PR18}. Let \(X\) be a normal variety and \(j:X^{sm}\hookrightarrow X\)  the embedding of its smooth locus. For any integer \(0\le p\le \dim(X)\) we define 
\[
\Omega_X^{[p]}:=j_*\Omega^p_{X^{sm}}=( \Omega^p_X)^{\vee\vee}
\]
to be the sheaf of reflexive \(p\)-forms on \(X\); here \(\Omega_X^\vee=\Hom_{\mathcal O_X}(\Omega^p_X,\mathcal O_X)\) is the dual sheaf as sheaf of \(\mathcal O_X\)-modules. 

\begin{definition}\label{def sing sympl}[Symplectic varieties - singular setting] Let \(X\) be a normal variety.
\begin{enumerate}
    \item A reflexive form \(\sigma\in \HH^0(X,\Omega_X^{[2]})\) is called \textit{symplectic} if its restriction \(\sigma|_{X^{sm}}\) is a holomorphic symplectic form on \(X^{sm}\).
    \item A pair \((X,\sigma)\) is a \textit{symplectic variety} if \(\sigma\) is a symplectic form on \(X\) and for some (hence any) resolution of singularities \(f:\widetilde X\to X\) the pullback \(f^*\sigma|_{X^{sm}}\) extends holomorphically to \(\widetilde X\).
    \item Given a symplectic variety \((X,\sigma)\) a resolution of singularities \(f:\widetilde X\to X\) is called a \textit{symplectic resolution} if \(f^*\sigma|_{X^{sm}}\) extends to a holomorphic symplectic form on \(\widetilde X\).
    \item A compact K\"ahler symplectic variety \((X,\sigma)\) is called a \textit{primitive symplectic variety} if \(h^1(X,\mathcal O_X)=0\) and \(h^0(X,\Omega_X^{[2]})=1\). If furthermore \(X\) is projective, then \((X,\sigma)\) is called a \textit{Namikawa symplectic variety}.
\end{enumerate}
When we don't need to keep track of the symplectic form \(\sigma\) we drop it from the notation.
\end{definition}

A particular class of primitive symplectic varieties are the irreducible-holomorphic symplectic (often called simply irreducible symplectic) varieties, introduced by \cite[Definition 8.16]{greb2016singular}; they are particularly relevant as they arise as one of the three building blocks in the Beauville-Bogomolov decomposition theorem in the singular setting we have recalled in \autoref{subsection background}. We will not be interested in this class of primitive varieties, so we omit their definition. 

By \cite[Proposition 1.3]{beauville.symplectic} a  symplectic variety \(X\) has rational singularities, i.e. it is normal and for any resolution of singularities \(f:\widetilde X\to X\) it holds \(R^if_*\mathcal O_{\widetilde X}=0\) for any \(i\ge 1\). As consequence one deduces the following.

\begin{proposition}\cite[Proposition 1.9]{PR18}\label{prop primitive sympl}
Let \(X\) be a connected projective symplectic variety
admitting a symplectic resolution \(f:\widetilde X\to X\) such that \(\widetilde X\) is an ihs manifold. Then  \(X \) is a Namikawa symplectic variety.
\end{proposition}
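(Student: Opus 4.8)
The plan is to verify directly the two numerical conditions appearing in \autoref{def sing sympl}(4): that $h^1(X,\mathcal O_X)=0$ and $h^0(X,\Omega_X^{[2]})=1$. Indeed $X$ is projective, hence compact K\"ahler, and it is a symplectic variety by hypothesis, so these two statements are all that is needed to conclude that $X$ is a primitive symplectic variety; since it is moreover projective, it is then a Namikawa symplectic variety. For the first condition I would use that a symplectic variety has rational singularities by Beauville's \cite[Proposition 1.3]{beauville.symplectic} recalled above: this gives $Rf_*\mathcal O_{\widetilde X}=\mathcal O_X$, so the Leray spectral sequence yields isomorphisms $H^i(X,\mathcal O_X)\cong H^i(\widetilde X,\mathcal O_{\widetilde X})$ for all $i$. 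As $\widetilde X$ is an ihs manifold it is simply connected, hence $b_1(\widetilde X)=0$ and, by Hodge theory, $h^1(\widetilde X,\mathcal O_{\widetilde X})=0$; therefore $h^1(X,\mathcal O_X)=0$.

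For the second condition the key point is the identification $H^0(X,\Omega_X^{[2]})\cong H^0(\widetilde X,\Omega^2_{\widetilde X})$. First, a symplectic resolution is crepant, and a crepant proper birational morphism onto a smooth variety has no exceptional divisors (over a smooth variety every exceptional prime divisor has positive discrepancy), hence is small; a small proper birational morphism onto a locally factorial variety is an isomorphism. Thus $f$ restricts to an isomorphism $f^{-1}(X^{sm})\xrightarrow{\sim}X^{sm}$, so $f_*\Omega^2_{\widetilde X}$ agrees with $\Omega^2_{X^{sm}}$ over $X^{sm}$; being torsion-free it embeds into its reflexive hull $j_*\Omega^2_{X^{sm}}=\Omega_X^{[2]}$, and taking global sections gives an injection $H^0(\widetilde X,\Omega^2_{\widetilde X})\hookrightarrow H^0(X,\Omega_X^{[2]})$. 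For the reverse inclusion I would invoke the extension theorem for reflexive differentials: $X$ has canonical Gorenstein (in particular klt) singularities, since a normal Gorenstein singularity is canonical exactly when it is rational, so any reflexive $2$-form on $X$, regarded via the isomorphism above as a holomorphic $2$-form on $f^{-1}(X^{sm})$, extends to a holomorphic $2$-form on all of $\widetilde X$ (Greb--Kebekus--Kov\'acs--Peternell; in the symplectic setting this also follows from work of Namikawa and Flenner). These two maps are inverse to one another, as they agree over the smooth locus, so $H^0(X,\Omega_X^{[2]})\cong H^0(\widetilde X,\Omega^2_{\widetilde X})$, which has dimension $h^{2,0}(\widetilde X)=1$ because $\widetilde X$ is ihs. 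Together with the obvious inequality $h^0(X,\Omega_X^{[2]})\ge 1$ coming from the symplectic form on $X$, this gives $h^0(X,\Omega_X^{[2]})=1$.

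The routine parts here are the Leray spectral sequence argument and the bookkeeping with reflexive hulls; the genuinely nontrivial input, and the step I would flag as the main obstacle to a self-contained proof, is the extension of the reflexive $2$-form across the exceptional locus of $f$. Note that $f^{-1}(X^{sing})$ may well be a divisor in $\widetilde X$ even though $X^{sing}$ has codimension at least two in $X$, so one cannot simply appeal to Hartogs on $\widetilde X$; the clean way around this is to cite the extension theorem for klt varieties, although in our symplectic situation one can also argue more directly, using that reflexive forms on symplectic varieties are closed and extend by Namikawa's results on $2$-forms on symplectic varieties.
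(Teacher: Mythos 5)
Your argument is correct, and it is essentially the route the paper indicates: the paper gives no proof of its own (the statement is quoted from \cite[Proposition 1.9]{PR18} and framed as a consequence of Beauville's theorem that symplectic varieties have rational singularities), and your write-up fills in precisely that outline, using rational singularities plus Leray to get \(h^1(X,\mathcal O_X)=h^1(\widetilde X,\mathcal O_{\widetilde X})=0\), and the extension theorem for reflexive forms on canonical (hence klt) singularities to identify \(\HH^0(X,\Omega_X^{[2]})\cong \HH^0(\widetilde X,\Omega^2_{\widetilde X})\), which has dimension one since \(\widetilde X\) is ihs. The only inessential detour is the crepancy/smallness argument showing that \(f\) is an isomorphism over \(X^{sm}\): for the dimension count it suffices that the pullback-and-extend map \(\HH^0(X,\Omega_X^{[2]})\to \HH^0(\widetilde X,\Omega^2_{\widetilde X})\) is injective, which already follows from birationality of \(f\); still, that claim is true and your justification of it is fine. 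You also correctly flag the one genuinely nontrivial input, namely the extension of reflexive \(2\)-forms across the exceptional locus, where Hartogs on \(\widetilde X\) would not suffice.
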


We denote by \(\HH^2(X,\ZZ)_{tf}\) the torsion-free part of the abelian group \(\HH^2(X,\ZZ)\). Another consequence of the fact that a symplectic variety has rational singularities is the following. 

\begin{proposition}\cite[Lemma 2.1, Corollary 3.5]{BL18} \label{prop cohomology sympl resol}
Let \(X\) be a compact symplectic variety. Then \(\HH^2(X,\ZZ)_{tf}\) carries a pure weight 2 Hodge structure. If furthermore \(f:\widetilde X\to X\) is a symplectic resolution of \(X\), then the pullback \(f^*:\HH^2(X,\mathbb Z)\to \HH^2(\widetilde X,\mathbb Z)\) is an inclusion of pure Hodge structures; in particular, \(\HH^2(X,\ZZ)\) is torsion-free.
\end{proposition}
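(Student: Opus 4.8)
The plan is to reduce everything to the Hodge theory of a resolution of \(X\), the bridge being the vanishing \(R^1f_*\ZZ=0\), which is a soft consequence of rational singularities. So first, fix a resolution of singularities \(f\colon\widetilde X\to X\) that is an isomorphism over the smooth locus \(X^{sm}\); since \(X\) is compact K\"ahler one may take \(\widetilde X\) to be compact K\"ahler as well. By \cite[Proposition 1.3]{beauville.symplectic}, \(X\) has rational singularities, so \(f_*\mathcal O_{\widetilde X}=\mathcal O_X\) and \(R^if_*\mathcal O_{\widetilde X}=0\) for \(i\ge 1\); moreover \(f\) has connected fibres because \(X\) is normal, hence \(f_*\ZZ_{\widetilde X}=\ZZ_X\). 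Normality also forces the singular locus \(X\setminus X^{sm}\) to have codimension at least \(2\), and together with Hartogs' extension this gives \(f_*\mathcal O_{\widetilde X}^{*}=\mathcal O_X^{*}\): a nowhere-vanishing holomorphic function on \(f^{-1}(U)\) restricts to one on \(U\cap X^{sm}\), which then extends to a unit on \(U\).

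Next I would push the exponential sequence \(0\to\ZZ_{\widetilde X}\to\mathcal O_{\widetilde X}\to\mathcal O_{\widetilde X}^{*}\to 0\) forward along \(f\). Using the identifications above, its long exact sequence contains
\[
\mathcal O_X=f_*\mathcal O_{\widetilde X}\xrightarrow{\ \exp\ }f_*\mathcal O_{\widetilde X}^{*}=\mathcal O_X^{*}\longrightarrow R^1f_*\ZZ_{\widetilde X}\longrightarrow R^1f_*\mathcal O_{\widetilde X}=0,
\]
and since the first arrow is the exponential map of sheaves on \(X\), which is surjective, we get \(R^1f_*\ZZ_{\widetilde X}=0\). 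Feeding this into the Leray spectral sequence \(\HH^p(X,R^qf_*\ZZ)\Rightarrow\HH^{p+q}(\widetilde X,\ZZ)\), the differentials relevant in total degree \(2\) vanish, so the pullback \(f^{*}\colon\HH^2(X,\ZZ)\to\HH^2(\widetilde X,\ZZ)\) is injective (and along the way \(f^{*}\colon\HH^1(X,\ZZ)\to\HH^1(\widetilde X,\ZZ)\) is an isomorphism).

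To conclude, I would invoke Deligne's theory of mixed Hodge structures (in the compact K\"ahler setting): \(f^{*}\) is a morphism of mixed Hodge structures on \(\HH^2(-,\mathbb{Q})\), and \(\HH^2(\widetilde X,\mathbb{Q})\) is pure of weight \(2\) since \(\widetilde X\) is smooth, compact and K\"ahler. An injective morphism of mixed Hodge structures whose target is pure of weight \(2\) has source pure of weight \(2\), because by strictness its image is a sub-Hodge structure; thus \(\HH^2(X,\mathbb{Q})\) is pure of weight \(2\), which is exactly a weight-\(2\) Hodge structure on the lattice \(\HH^2(X,\ZZ)_{tf}\), and \(f^{*}\) realizes it as a sub-Hodge structure of \(\HH^2(\widetilde X,\ZZ)\). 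If furthermore \(f\) is a symplectic resolution with \(\widetilde X\) an ihs manifold, then \(\widetilde X\) is simply connected, so \(\HH^2(\widetilde X,\ZZ)\) is torsion-free; being a subgroup of it, \(\HH^2(X,\ZZ)\) is torsion-free too, whence \(\HH^2(X,\ZZ)=\HH^2(X,\ZZ)_{tf}\) and \(f^{*}\) is the asserted inclusion of pure Hodge structures.

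The step I expect to require the most care is the Hodge-theoretic input: one needs Deligne-type mixed Hodge structures and the strictness of pullbacks in the (a priori non-projective) compact K\"ahler category, together with the fact that a resolution of a compact K\"ahler space can be chosen K\"ahler. In the Namikawa (projective) case this is classical; in general it rests on mixed Hodge complexes for compact K\"ahler spaces. Everything else is formal once the vanishing \(R^1f_*\ZZ_{\widetilde X}=0\) is established, which is the only place the rational-singularities hypothesis enters.
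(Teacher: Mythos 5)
The paper gives no proof of this proposition: it is quoted as a citation of \cite[Lemma 2.1, Corollary 3.5]{BL18}, so the only meaningful comparison is with the argument in that reference, and your route is essentially that one. Rational singularities via \cite[Proposition 1.3]{beauville.symplectic}, the pushed-forward exponential sequence giving \(R^1f_*\ZZ_{\widetilde X}=0\) (using \(f_*\mathcal O_{\widetilde X}=\mathcal O_X\), \(f_*\mathcal O^*_{\widetilde X}=\mathcal O^*_X\) and \(R^1f_*\mathcal O_{\widetilde X}=0\)), the Leray spectral sequence giving injectivity of \(f^*\) on \(\HH^2\), and purity of \(\HH^2(X,\QQ)\) by functoriality and strictness of mixed Hodge structures are exactly the standard steps; the points you flag as delicate (choosing a K\"ahler resolution of a compact K\"ahler space, and the availability of functorial mixed Hodge theory outside the projective category) are indeed the places where care is needed, and they are addressed in \cite{BL18}.

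The one place where your argument does not match the statement is the torsion-freeness. You deduce it from simple connectedness of \(\widetilde X\), i.e.\ you assume the symplectic resolution is an ihs manifold, whereas the proposition as written allows an arbitrary symplectic resolution of an arbitrary compact symplectic variety. What your argument actually proves is that the torsion of \(\HH^2(X,\ZZ)\) injects into the torsion of \(\HH^2(\widetilde X,\ZZ)\), and to conclude you need the latter to vanish; this is not automatic. For instance, the free quotient of an abelian fourfold by the involution \((z_1,z_2,z_3,z_4)\mapsto(z_1+t,z_2,-z_3,-z_4)\), with \(t\) a nonzero \(2\)-torsion point, preserves the symplectic form \(dz_1\wedge dz_2+dz_3\wedge dz_4\), hence is a smooth compact K\"ahler symplectic variety (so it trivially admits a symplectic resolution), yet its first homology, and therefore its integral \(\HH^2\), contains \(2\)-torsion. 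So some hypothesis guaranteeing that \(\HH^2(\widetilde X,\ZZ)\) is torsion-free is genuinely needed for the last clause; the precise hypotheses are those of \cite{BL18}, and your added assumption is harmless for every use in this paper, where the resolution is the O'Grady ihs manifold \(\widetilde\K_v\). You should state that assumption (or assume directly that \(\HH^2(\widetilde X,\ZZ)\) is torsion-free) rather than importing it silently.
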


The examples of (projective) ihs manifolds and primitive symplectic varieties we are going to work with are defined as moduli spaces of sheaves on canonical trivial surfaces. We are going to briefly recall here their definition and some results about them; for further details we refer to \cite[\S 1.3]{PR18}.

Let \(S\) be a projective \(\K3\) surface or an abelian surface. Given a coherent sheaf \(\mathcal F\in\Coh(S)\) the Mukai vector of \(\mathcal F\) is defined as
\[
\mathfrak v(\mathcal F):=\ch(\mathcal F)\sqrt{\td(S)}\in \HH^{2*}(S,\ZZ):=\HH^0(S,\ZZ)\oplus \HH^2(S,\ZZ)\oplus \HH^4(S,\ZZ).
\]
For any \(v=(r,l,s)\in \HH^{2*}(S,\ZZ)\) we say that \(v\) is a \textit{Mukai vector} if \(r\ge 0\) and \(l\in \NS(S)\), and if \(r=0\) then \(l\) is the first Chern class of an effective divisor or \(l=0\) and \(r>0\). Given a polarization \(h\) on \(S\), i.e. \(h=c_1(H)\) is the first Chern class of an ample divisor \(H\) on \(S\), there is a notion of \(v\)-genericity for \(h\), that we are not going to recall here; when \(\rk\Pic(S)=1\) then any polarization is \(v\)-generic, and this is the only case we are going to consider in the following sections.

\begin{definition}
Let \(S\) be a projective \(\K3\) or abelian surface, \(v\) a Mukai vector and \(h\) a \(v\)-generic polarization.
\begin{enumerate}
    \item We call \(\M_v(S,h)\) the moduli space of S-equivalence classes of Gieseker \(h\)-semistable sheaves \(\mathcal F\in \Coh(S)\) with Mukai vector \(\mathfrak v(\mathcal F)=v\).
    \item  When \(S\) is an abelian surface we call \(\K_v(S,h):=a_v^{-1}(0_S,\cO_{S})\subset\M_v(S,h)\), where \(a_v\) is the isotrivial fibration
\begin{align*}
    a_v:\M_v(S,h)&\to S\times S^\vee \\
    \mathcal F&\mapsto\bigl(Alb (c_2(\mathcal F)), \det(\mathcal F)\otimes\det(\mathcal F_0)^{-1}\bigl) 
\end{align*}
Here  \(Alb:\CH_0(S)\to S\) is the Albanese morphism and \([\mathcal F_0]\in \M_v(S,h)\) is the S-equivalence class of a fixed coherent sheaf.
\end{enumerate}
When no confusion on \(S\) and \(h\) is possible, we drop them from the notation.
\end{definition}

The moduli spaces \(\M_v\) and \(\K_v\) are projective, connected, normal varieties, and results on them are known by the work of several authors. We collect in the following theorem results about the only case we will be interested in in what follows.

\begin{theorem}\label{thm.K_v} Let \(A\) be an abelian surface, \(v\) a Mukai vector and \(h\) a \(v\)-generic polarization.
\begin{enumerate} 
    \item \cite{OG6} If \(v=(2,0,-2)\in \HH^{2*}(A,\ZZ)\) then \(\K_v=:\K_6\) is singular and it admits a symplectic resolution \(\widetilde \pi:\widetilde\K_6\to\K_6 \), where \(\widetilde\K_6\) is a projective ihs manifold of dimension 6. 
    \item \cite{lehnsorger}\cite{PR:OG10} Let \(v\) be a Mukai vector such that \(v=2w\) with \(w=(r,l,s)\) primitive and \(l^2-2rs=2\). Then \(\K_v\) is a singular symplectic variety and it admits a symplectic resolution \(\widetilde \pi:\widetilde\K_v\rightarrow \K_v\), where \(\widetilde\K_v\) is a projective ihs manifold deformation equivalent to \(\widetilde\K_6\).
    \item \cite[Theorem 1.17]{PR18} Let \(v_1\) and \(v_2\) be Mukai vectors as in (2). Then \(\K_{v_1}\) and \(\K_{v_2}\) are locally trivial deformation one of the other.
\end{enumerate}
\end{theorem}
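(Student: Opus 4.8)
The statement collects three results from the literature \cite{OG6,lehnsorger,PR:OG10,PR18}, so the plan is to recall the construction in each case and to indicate the mechanism that makes it work; the common engine is the local structure theory of moduli spaces of sheaves on a surface with trivial canonical bundle. For (1) I would follow O'Grady \cite{OG6}. First I would identify the singular locus of \(\K_6\): since \(v=(2,0,-2)=2w\) with \(w=(1,0,-1)\) primitive and \(w^2=2\), the strictly semistable points are the sheaves S-equivalent to \(F_1\oplus F_2\) with \(\mathfrak v(F_i)=w\), so \(\mathrm{Sing}(\K_6)\) consists of these classes, with a deeper stratum where \(F_1\cong F_2\). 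O'Grady resolves these singularities by an explicit sequence of blow-ups along the reduced singular strata and checks that the symplectic form on \(\K_6^{sm}\) extends to a holomorphic symplectic form on the resulting smooth projective variety \(\widetilde\K_6\); that \(\widetilde\K_6\) is an ihs manifold, and that it constitutes a new deformation class, then follows from a computation of its Hodge and Betti numbers (in particular \(b_2=8\)).

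For (2) I would argue local-to-global. By the local structure theory of Kaledin--Lehn--Sorger and Lehn--Sorger \cite{lehnsorger}, in the form used in \cite{PR:OG10}, the formal (equivalently, \'etale-local) model of a singularity of \(\M_v\) at a polystable sheaf depends only on the Ext-quiver of its stable summands, hence, for \(v=2w\) with \(w\) primitive and \(w^2=l^2-2rs=2\), only on the numbers \(\dim\mathrm{Ext}^1(G,G)\) determined by \(w^2=2\). Thus the singularities of \(\K_v\) are \'etale-locally isomorphic to those of \(\K_6\), so the same resolution procedure yields a symplectic resolution \(\widetilde\pi\colon\widetilde\K_v\to\K_v\) with \(\widetilde\K_v\) smooth projective and carrying an extension of the symplectic form. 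That \(\widetilde\K_v\) is deformation equivalent to \(\widetilde\K_6\) is then obtained by a deformation argument: one deforms the pair \((A,v)\) keeping \(v=2w\) and \(w^2=2\), so that the singularity type stays constant, and transports the resolution along the family.

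For (3) I would follow Perego--Rapagnetta \cite[Theorem 1.17]{PR18}: given \(v_1\) and \(v_2\) as in (2), one connects the data \((A_1,v_1,h_1)\) and \((A_2,v_2,h_2)\) by a family \((A_t,v_t,h_t)\) over a connected base, with \(v_t=2w_t\) and \(w_t^2=2\) throughout, and forms the relative moduli space. Because the fibrewise singularity type is constant along the family — again by the local model depending only on \(w^2=2\) — the relative moduli space is a locally trivial family, exhibiting \(\K_{v_1}\) and \(\K_{v_2}\) as locally trivial deformations of one another; the \(v\)-genericity hypothesis guarantees that the polarizations may be chosen to avoid the walls where the S-equivalence relation, and hence the moduli space, would jump.

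The main obstacle is the local structure analysis underlying both (2) and (3): identifying the formal neighbourhood of a singular point with the explicit symplectic model of Lehn--Sorger and proving that this model admits a symplectic resolution precisely when \(w\) is primitive with \(w^2=2\) and \(v=2w\). Once this \'etale-local normal form, together with its constancy in families, is in hand, the existence of \(\widetilde\K_v\), its deformation equivalence to \(\widetilde\K_6\), and the locally trivial deformation statement all follow by globalizing and transporting along connected families; by comparison, the topological computation needed in (1) is laborious but routine.
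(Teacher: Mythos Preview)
The paper does not supply a proof of this theorem: it is stated purely as a compendium of results from the cited literature \cite{OG6,lehnsorger,PR:OG10,PR18}, with no argument or sketch given, and the text moves directly to the subsequent definition. So there is nothing in the paper's treatment to compare your proposal against beyond the bare citations.

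That said, your outline is a faithful summary of the mechanisms in those references. One small refinement: for (2), Lehn--Sorger \cite{lehnsorger} show that the symplectic resolution is obtained by a \emph{single} blow-up along the reduced singular locus \(\Sigma_v\) (this is also recalled later in the paper, in \autoref{subsection MRS model}), rather than by transporting O'Grady's original multi-step procedure; the identification of the \'etale-local model and the deformation-equivalence to \(\widetilde\K_6\) via families of \((A,v)\) is indeed the content of \cite{PR:OG10}. Your description of (3) matches the strategy of \cite[Theorem 1.17]{PR18}.
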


\begin{definition}\label{def OG6 K_v}
A moduli space \(\K_v\) as in the hypotheses of \autoref{thm.K_v}.(2) is called a \textit{\(\OG_6^s\) moduli space}.  A primitive symplectic variety that is a locally trivial deformation of \(\K_6\) is called of \textit{\(\OG_6^s\)-type}.
\end{definition}

As a consequence of \autoref{prop primitive sympl} the moduli spaces \(\K_v\) in the theorem above are Namikawa symplectic varieties. 

A desingolarized moduli space \(\widetilde\K_v\) as in \autoref{thm.K_v}.(2) is called a \(\OG_6\) moduli space, and it is by definition an \(\OG_6\)-type ihs manifold.


\subsection{Lattice theory on primitive symplectic varieties} A \textit{lattice}  is a free \(\mathbb{Z}\)-module \(L\) of finite rank endowed with a non-degenerate symmetric bilinear form 
\[ 
q_L\colon L \times L \to \mathbb{Z}.
\] 
A lattice is called \textit{even} if \(q_L(x):=q_L(x,x)\)  is even for any \(x\in L\). Associated to any lattice \(L\) there is a finite abelian group called the \textit{discriminant group} and defined as \[\bA_L=L^{\vee}/L\] where \(L \hookrightarrow L^{\vee}=\Hom_\ZZ(L,\ZZ)\) is the canonical embedding. Observe also that 
\begin{equation}\label{eq.iso.L*}
L^{\vee}\cong\{v\in L\otimes_\ZZ \QQ\ |\ (v,w)\in \ZZ,\ \forall w\in L\}. 
\end{equation}
Given \(v\in L\) we denote by \([v]\) its class in \(\bA_L\) and by \(\divi(v)\) the divisibility of \(v\), i.e. the positive integer generator of the ideal
\((v,L)\subseteq\mathbb{Z}\). Given generators \(\{v_i\}_{i=1}^{\rk(L)}\) of \(L\), the classes \(\{[\frac{v_i}{\divi(v_i)}]\}_{i=1}^{\rk(L)}\) generate \(\bA_L\). 

The order of the discriminant group \(\bA_L\) equals \(|\det(q_L)|\) and this number is called \textit{discriminant} of the lattice \(L\). A lattice \(L\) is called \textit{unimodular} if \(\bA_L=\{\id\}\). 

Because of the isomorphism \eqref{eq.iso.L*}, the pairing on \(L\) induces by \(\QQ\)-linear extension a pairing with \(\QQ\)-values on \(L^{\vee}\) and hence a pairing on \(\bA_L\) with values in \(\QQ/\ZZ\). If the lattice \(L\) is even, then the \(\QQ\)-valued quadratic form on \(L^{\vee}\) gives rise to a quadratic form 
\[
q_{\bA_L}\colon \bA_L \to \QQ/2\ZZ,
\]
called the \textit{discriminant quadratic form} of \(L\).
 
The torsion-free part of second integral cohomology of a primitive symplectic variety has a lattice structure, as we recall in the following theorem.

\begin{theorem}\label{thm BBF and Fujiki} Let \((X,\sigma)\) be a primitive symplectic variety of dimension \(2n\).
\begin{enumerate}
    \item \cite{beauville.varietes}\footnote{This is the reference for the same result in the smooth setting, i.e. the case of ihs manofods.}\cite{kirschner2015}\cite{matsushita15}\cite{Schwald_Fujiki}\cite{BL18} There is a quadratic form \(q_X\) on \(\HH^2(X,\ZZ)\), called \textnormal{Beauville-Bogomolov-Fujiki} (in short, \textnormal{BBF}) form, which is defined up to scaling by \footnote{For a complex compact variety \(X\) the integration over its singular top cohomology group \(\HH^{2n}(X,\ZZ)\) is defined as \(\int_X\alpha:=[X]\cap\alpha\), where \([X]\in \HH_{2n}(X,\ZZ)\) is the fundamental class. Integration commutes with pullbacks by bimeromorphic maps.}
    \[
    q_X(\alpha):=\frac{n}{2}\int_X (\sigma\overline\sigma)^{n-1}\alpha^2+(1-n)\int_X\sigma^n\overline\sigma^{n-1}\alpha\int_X\sigma^{n-1}\overline\sigma^n\alpha 
    \]
    and that is non-degenerate of signature \((3,b_2(X)-3)\). Such form gives a lattice structure to \(\HH^2(X,\ZZ)_{tf}\).
    \item \cite[Theorem 4.7]{fujiki}\footnote{This is the reference for the same result in the smooth setting, i.e. the case of ihs manifods.}\cite[Theorem 2]{Schwald_Fujiki}\footnote{Note that the terminology used in \cite{Schwald_Fujiki} is different from the one we have chosen, as he refers to primitive symplectic varieties as irreducible symplectic varieties, cfr. Definition 1.(3) therein. },\cite[Theorem 5.20]{BL18}. There is a positive real number \(c_X\), called \textnormal{Fujiki constant}, such that for any \(\alpha\in \HH^2(X,\mathbb C)\) the following relation, called \textnormal{Fujiki relation}, holds true:
    \[
    \int_X \alpha^{2n}=c_X\cdot q_X(\alpha)^n.
    \]
\end{enumerate}
\end{theorem}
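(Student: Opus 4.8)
The plan is to transcribe the classical Beauville--Bogomolov--Fujiki argument to the singular setting, the point being that every cohomological operation it uses survives for a primitive symplectic variety: \(X\) is compact, so \(\int_X\alpha:=[X]\cap\alpha\) is defined on \(\HH^{4n}(X,\mathbb{Z})\) and commutes with pullback by a resolution; \(\HH^2(X,\mathbb{Z})_{tf}\) carries a pure weight-two Hodge structure by \autoref{prop cohomology sympl resol}, whose \((2,0)\)-part is spanned by the class of the reflexive symplectic form \(\sigma\); and a Kähler class \(\omega\) exists by hypothesis. I would prove the Fujiki relation of part (2) first and then read off non-degeneracy and signature in part (1). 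Normalise \(\sigma\) so that \(\int_X(\sigma\overline\sigma)^n=1\); since \(q_X\) is only asked for up to scaling this is harmless. Observe that \(\sigma^{n+1}=0\) already as a form (a holomorphic \((2n+2)\)-form on the \(2n\)-dimensional smooth locus), so \(\sigma^k=0\) for \(k>n\); this is used repeatedly, and it forces \(q_X(\sigma)=0\) directly from the displayed formula.

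For the Fujiki relation put \(F(\alpha):=\int_X\alpha^{2n}\), a homogeneous polynomial of degree \(2n\) on \(\HH^2(X,\mathbb{C})\). In the expansion \(F(\sigma+\beta)=\sum_k\binom{2n}{k}\int_X\sigma^{2n-k}\beta^k\) only the terms with \(k\ge n\) survive, so \(F\) vanishes to order at least \(n\) at the class \(\sigma\); the same computation, applied after a locally trivial deformation of \((X,\sigma)\), shows \(F\) vanishes to order \(n\) at the period of every such deformation. By local Torelli for primitive symplectic varieties \cite{BL18}, these periods fill a nonempty open, and therefore Zariski-dense, subset of the quadric \(Q:=\{q_X=0\}\subset\mathbb{P}(\HH^2(X,\mathbb{C}))\), so \(F\) vanishes to order \(n\) along all of \(Q\). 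Since \(q_X\) has rank at least \(3\) — the real span of \(\mathrm{Re}\,\sigma,\mathrm{Im}\,\sigma,\omega\) is \(q_X\)-positive definite, by an elementary computation with the normalised \(\sigma\) together with the positivity \(\int_X(\sigma\overline\sigma)^{n-1}\omega^2>0\) — the polynomial \(q_X\) is irreducible, whence \(q_X^n\mid F\); comparing degrees gives \(F=c_Xq_X^n\) for a constant \(c_X\), positive because \(\int_X\omega^{2n}>0\).

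Non-degeneracy of \(q_X\) is then immediate from \(F\not\equiv 0\). For the signature I would use the \(q_X\)-orthogonal splitting \(\HH^2(X,\mathbb{R})=\langle\mathrm{Re}\,\sigma,\mathrm{Im}\,\sigma,\omega\rangle\perp W\): the first summand is positive definite as above, and one checks from the formula (using \(h^{2,0}=1\) and Hodge type) that \(W\) consists of \(\omega\)-primitive classes of type \((1,1)\). On these the form \(w\mapsto\int_X(\sigma\overline\sigma)^{n-1}w^2\) is, up to a positive scalar, the Hodge--Riemann form of \(\omega\), and since \(h^{2,0}=1\) forces \(b_2(X)\ge 3\), hard Lefschetz and the Hodge--Riemann bilinear relations for \(\HH^2(X)_{tf}\) make it negative definite of rank \(b_2(X)-3\); hence \(q_X\) has signature \((3,b_2(X)-3)\). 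That a suitable rescaling of \(q_X\) takes integral values on \(\HH^2(X,\mathbb{Z})\) is checked as in \cite{BL18}.

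The one genuinely non-formal ingredient is that hard Lefschetz and the Hodge--Riemann bilinear relations hold for \(\HH^2(X)_{tf}\): classical in the smooth case, but for a primitive symplectic variety they must be imported, either from the decomposition theorem or, when a symplectic resolution \(f:\widetilde X\to X\) exists, from \autoref{prop cohomology sympl resol}, which realises \(\HH^2(X)_{tf}\) as a sub-Hodge structure of \(\HH^2(\widetilde X)\) compatibly with cup products and integration. The second point to be careful about is the availability of an unobstructed locally trivial Kuranishi family with base of the expected dimension \(b_2(X)-2\), which underlies the Zariski-density step; this is part of the deformation theory of primitive symplectic varieties developed in \cite{BL18}. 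Granting these, the remaining work is bookkeeping with multinomial coefficients, exactly as in the smooth case.
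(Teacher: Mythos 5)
First, a point of comparison: the paper does not prove this theorem at all — it is quoted from the literature (Beauville, Kirschner, Matsushita, Schwald, Bakker--Lehn), so there is no internal proof to measure your argument against. That said, your reconstruction follows the same classical Beauville--Fujiki route that the cited references adapt to the singular setting: local Torelli and unobstructedness of \emph{locally trivial} deformations from \cite{BL18}, vanishing of \(F(\alpha)=\int_X\alpha^{2n}\) to order \(n\) along a dense subset of the period quadric (using that locally trivial families are topologically trivial, so \(F\) is constant under parallel transport — worth saying explicitly), irreducibility of \(q_X\) from rank \(\ge 3\), then divisibility and degree count. Note also that for the rank \(\ge 3\) input you do not need Hodge--Riemann at all: positivity on \(\langle\mathrm{Re}\,\sigma,\mathrm{Im}\,\sigma,\omega\rangle\) follows from the normalization together with the pointwise positivity of \((\sigma\overline\sigma)^{n-1}\omega^2\) on \(X^{sm}\) (which itself has to be justified via a resolution, since \(\omega\) is a Kähler class on a singular space). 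So the Fujiki-relation part of your sketch is sound.

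The genuine gap is in the signature statement of part (1), which you correctly isolate as the non-formal ingredient but then dispose of too quickly. A general primitive symplectic variety need not admit a symplectic resolution, let alone one by an ihs manifold, so \autoref{prop cohomology sympl resol} cannot be the source of hard Lefschetz/Hodge--Riemann in the generality of the theorem. And even when a resolution \(f\colon\widetilde X\to X\) exists, realising \(\HH^2(X,\ZZ)_{tf}\) as a sub-Hodge structure of \(\HH^2(\widetilde X,\ZZ)\) compatibly with cup product and integration does not transfer the Hodge--Riemann relations for a Kähler class \(\omega\) on \(X\): the class \(f^*\omega\) is only semi-positive (big and nef in the projective case), so the relations available on \(\widetilde X\) concern a different class, and the limiting argument with \(f^*\omega+\epsilon\eta\) only yields negative \emph{semi}-definiteness on the \(q_X\)-orthogonal complement; excluding null vectors requires an additional argument. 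This is exactly the content supplied by \cite{Schwald_Fujiki} and \cite[\S 5]{BL18} (via rational singularities, decomposition-theorem/intersection-cohomology input, or reduction to the projective case — and the non-projective Kähler case is itself delicate). As written, your signature step is therefore not a proof but a re-citation of those references, which is precisely what the paper does.
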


As consequence of item (2) in \autoref{thm BBF and Fujiki} the BBF form \(q_X\) and the Fujiki constant \(c_X\) are deformation invariant. The definition of the BBF quadratic form and its compatibility with the Hodge structure on \(\HH^2(X,\ZZ)\) are the starting point for Torelli theorems on primitive symplectic varities proved in \cite{BL18}.

\begin{remark}\label{rem BBF and Fujiki invariant up to resolution}
Let \(X\) be a primitive symplectic variety and \(f:\widetilde X\to X\) a symplectic resolution such that \(\widetilde X\) is an ihs manifold. A natural question is whether the inclusion  \(f^*:\HH^2(X,\ZZ)\hookrightarrow\HH^2(\widetilde X,\ZZ)\) in \autoref{prop cohomology sympl resol} is compatible with the BBF-lattice structures, i.e. if
\(q_X(\alpha)=q_{\widetilde X}(f^*\alpha)\) for any \(\alpha\in \HH^2(X,\ZZ)\). The answer to this question is positive  \cite[Corollary 24]{Schwald_Fujiki}. In fact this approach was the first one to be used to define a lattice structure on \(\HH^2(X,\ZZ)\) in the particular case of primitive symplectic varieties admitting a symplectic resolution given by an ihs manifold, see \cite[Theorem 8.2]{namikawa01} and \cite{PR:OG10} in the case of moduli spaces of sheaves. \\
As a consequence, under the same hypotheses one concludes \(c_X=c_{\widetilde{X}}\), i.e.\ the Fujiki constant does not change under symplectic resolutions.
\end{remark}

\begin{example}\label{ex lattice OG6}
When \(X\) is a primitive symplectic variety of \(\OG_6^{s}\)-type then \(\HH^2(X,\ZZ)\) is torsion-free by \autoref{prop cohomology sympl resol} and it is an even lattice of signature \((3,4)\). More precisely, \cite[Theorem 1.7]{PR:OG10} gives 
\[ \HH^{2}(X,\mathbb{Z})\cong \bU^{\oplus3} \oplus [-2] = \colon \bL_{\OG_6^s} \] 
where \(\bU\) is the hyperbolic lattice and \([-2]\) is a rank-one lattice generated by an element of square \(-2\); we call \(\sigma\) this element of square \(-2\). Let \(\bA_{\OG_6^s}\) be the discriminant group of \(\bL_{\OG_6^s}\). Observe that \(\sigma\) has divisibility \(2\) in \(\bL_{\OG_6^s}\) and \(\bU^{\oplus 3}\) is unimodular, hence  \([\frac{\sigma}{\divi(\sigma)}]\) generates the discriminant group \(\bA_{\OG_6^s}\cong \ZZ/2\ZZ\). With this notation it holds that

\begin{equation}\label{eq discriminant form}
q_{\bA_{\OG_6^s}}\bigl(\Bigl[\frac{\sigma}{\divi(\sigma)}\Bigl]\bigl)= \Bigl[\frac{3}{2}\Bigl]\in\QQ/2\ZZ.
\end{equation}
\end{example}


\section{Positive uniruled divisors on O'Grady's moduli spaces}\label{section uniruled on K}

Let \(\K_v\) be an \(\OG_6^s\) moduli space, which is a Namikawa symplectic variety of \(\OG_6^s\)-type. The goal of this section is to prove \autoref{thm_positive}, i.e. the existence of uniruled divisors in any linear system defined by a positive effective divisor on \(\K_v\), up to a positive multiple. 

\subsection{Mongardi-Rapagnetta-Saccà model}\label{subsection MRS model} We consider here a construction firstly introduced by Rapagnetta \cite{rapagnettaOG6} in the case of Mukai vector \((0,2\theta,-2)\), and then generalized by Mongardi-Rapagnetta-Saccà \cite{mong.sac.rapagn} for any Mukai vector as in the assumptions of \autoref{thm.K_v}.(2), i.e. the ones giving an \(\OG_6^s\) moduli space. We briefly recall here the main steps of this construction; the interested reader can find any further detail in the original work \cite{mong.sac.rapagn}.

Let \(\widetilde\K_v\) be the symplectic resolution of \(\K_v\) as in \autoref{thm.K_v}, that is an ihs manifold of \(\OG_6\)-type. Lehn and Sorger \cite{lehnsorger} prove that \(\widetilde \K_v\cong \Bl_{\Sigma_v}\K_v\), where \(\Sigma_v\) is the singular locus of \(\K_v\); we call \(\widetilde \Sigma_v\) the exceptional divisor of this blow-up. Rapagnetta \cite[Theorem 3.3.1]{rapagnettaOG6} proves that \(\widetilde\Sigma_v\) is divisible by two in \(\Pic(\widetilde \K_v)\), hence (being \(\Pic(\widetilde\K_v)\) torsion free) there exists a unique normal projective variety \(\widetilde Y_v\) with a double cover \(\widetilde\varepsilon_v\colon\widetilde Y_v\to\widetilde\K_v\) branched over \(\widetilde\Sigma_v\). 

By \cite[Theorem 4.2]{mong.sac.rapagn} this construction transfers to the variety \(\K_v\), i.e. there exists a normal projective variety \(Y_v\) with a finite 2:1 morphism \(\varepsilon_v\colon Y_v\to \K_v\) with branch locus \(\Sigma_v\); the singular locus of \(Y_v\) is denoted by \(\Gamma_v\) and it consists of 256 points. Finally, we consider the blow-ups \(\overline Y_v:=\Bl_{\Gamma_v} Y_v\) and \(\overline\K_v:=\Bl_{\Omega_v}\K_v\), where \(\Omega_v\subset\K_v\) is the singular locus of \(\Sigma_v\); we call \(\overline \Gamma_v\subset \overline Y_v\) the exceptional locus of the first blow-up. By \cite[Corollary 4.3]{mong.sac.rapagn} there exists a finite 2:1 morphism \(\overline\varepsilon_v\colon\overline Y_v\to\overline \K_v\) branched over \(\overline\Sigma_v:=\Bl_{\Omega_v}\Sigma_v\) lifting \(\varepsilon_v\colon Y_v\to\K_v\), i.e. forming a commutative diagram together with the blow-up morphisms defining \(\overline Y_v\) and \(\overline\K_v\).

The variety \(\overline Y_v\) is smooth and \(Y_v\) is normal. It follows that the morphisms \(\varepsilon_v\) and \(\overline\varepsilon_v\)  induce regular involutions \(\tau_v:Y_v\to Y_v\) and \(\overline\tau_v:\overline Y_v\to\overline Y_v\)  respectively; as \(\K_v\) and \(\overline \K_v\) are normal, the morphisms \(\varepsilon_v\) and \(\overline \varepsilon_v\) are indeed the quotient maps with respect to the involutions \(\tau_v\) and \(\overline\tau_v\), see \cite[Remark 4.7]{mong.sac.rapagn}. We summarize the picture we obtained so far in the following commutative diagram:
\begin{center}
\begin{tikzcd}
    \overline Y_v=\Bl_{\Gamma_v} Y_v \arrow[r, "\overline\varepsilon_v"] \arrow[loop left, "\overline\tau_v"] \arrow[d] & \overline\K_v=\Bl_{\Omega_v}\K_v \arrow[d] \\
    Y_v \arrow[r, "\varepsilon_v"] \arrow[loop left, "\tau_v"] & \K_v
\end{tikzcd}
\end{center}
The exceptional divisor \(\overline\Gamma_v\subset \overline Y_v\) consists of the union  of 256 varieties \(I_{v,i}\) which are isomorphic to the incidence variety \(\PP(V)\times\PP(V)\), where \(V\) is a complex vector space of dimension 4. Let \(p_{v,i}:I_{v,i}\to \PP(V)\) be one of the two natural projections, which is a \(\PP^2\)-fibration; the normal bundle of \(I_{v,i}\) in \(\overline Y_v\) has degree -1 on the fibers of \(p_{v,i}\), hence by Nakano's contraction theorem there exists a complex manifold \(\underline Y_v\) with a morphism of complex manifolds \(h_v:\overline Y_v\to\underline Y_v\) having exceptional locus \(\overline\Gamma_v\) and such that \(J_v:=h_v(\overline\Gamma_v)\) is the union of 256 copies of \(\PP^3\). Furthermore, \(h_v\) realizes \(\overline Y_v\) as the blow-up of \(\underline Y_v\) along \(J_v\). 

The fundamental point of this last construction is that \(\underline Y_v\) is an ihs manifold of \(\K3^{[3]}\)-type, i.e. an ihs manifold deformation equivalent to the Hilbert of \(n\) points on a \(\K3\) surface \cite[Proposition 5.3]{mong.sac.rapagn}. As the involution \(\overline\tau_v\) on \(\overline Y_v\) sends \(\overline \Gamma_v\) to itself, it descends to a rational involution \(\underline\tau_v:\underline Y_v\dashrightarrow \underline Y_v\), regular on \(\underline Y_v\smallsetminus J_v\). By construction of \(\underline Y_v\) there is a regular birational morphism \(f_v:\underline Y_v\to Y_v\) contracting \(J_v\) to \(\Gamma_v\) and such that the following diagram is commutative:
\begin{center}
    \begin{tikzcd}
        & \overline Y_v \arrow[loop right, "\overline\tau_v"] \arrow[dd] \arrow[dl, "h_v"] & \\
        \underline Y_v \arrow[loop left, "\underline\tau_v", dashed] \arrow[dr, "f_v"] & & \\
        & Y_v \arrow[loop left, "\tau_v"] \arrow[r, "\varepsilon_v"] & \K_v
    \end{tikzcd}
\end{center}
As observed in \cite[Remark 5.4]{mong.sac.rapagn} the regular involution \(\overline\tau_v\) exchanges the two \(\PP^2\)-fibrations on each \(I_{v,i}\), hence \(\underline \tau_v\) can not be extended out of \(\underline Y_v\smallsetminus J_v\), as \(\underline Y_v\) is obtained from \(\overline Y_v\) by contracting only one of theme; therefore the indeterminacy locus  of \(\underline\tau_v\) is exactly \(J_v\). 

We call 
\begin{equation}\label{eq.Phi}
    \Phi_v:=\varepsilon_v\circ f_v:\underline Y_v\to \K_v
\end{equation}
which is a generically 2:1 regular morphism contracting \(J_v\subset \underline Y_v\) to points. The \(\OG_6^s\)-type Namikawa symplectic variety \(\K_v\) is in this sense realized as a ‘quotient' of the \(\K3^{[3]}\)-type ihs manifold \(\underline Y_v\) via the rational involution \(\underline\tau_v\).

We conclude relating the BBF-lattice structure of \(\K_v\) and \(\underline Y_v\) via the morphism \(\Phi_v\).

\begin{proposition}\label{prop Phi* on the lattices}
For any \( \alpha\in \HH^2(\K_v,\ZZ)\) we have 
\[q_{\underline Y_v}(\Phi_v^*(\alpha))=2 \cdot q_{\K_v}(\alpha)\]
where \(q_{\underline Y_v}\) and \(q_{\K_v}\) are the BBF-forms on \(\underline Y_v\) and \(\K_v\) respectively.
\end{proposition}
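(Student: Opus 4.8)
The plan is to derive the identity by comparing the Fujiki relations of \autoref{thm BBF and Fujiki}.(2) on the two six-dimensional varieties $\underline Y_v$ and $\K_v$ through the degree-$2$ morphism $\Phi_v$. First I would determine how integration of top-degree classes behaves along $\Phi_v$: since $\Phi_v=\varepsilon_v\circ f_v$ is a proper surjective morphism which is generically finite of degree $2$ (the map $\varepsilon_v$ is finite of degree $2$ and $f_v$ is birational), it satisfies $\Phi_{v*}[\underline Y_v]=2\,[\K_v]$ in $\HH_{12}(\K_v,\ZZ)$; combining this with the projection formula for the cap product, for every $\beta\in\HH^{12}(\K_v,\ZZ)$ one obtains
\[
\int_{\underline Y_v}\Phi_v^{*}\beta=\Phi_{v*}\bigl([\underline Y_v]\cap\Phi_v^{*}\beta\bigr)=\bigl(\Phi_{v*}[\underline Y_v]\bigr)\cap\beta=2\int_{\K_v}\beta .
\]
This is the degree-$2$ counterpart of the invariance of integration under bimeromorphic maps recalled in \autoref{thm BBF and Fujiki}; the loci along which $\Phi_v$ is not an \'etale double cover, namely the branch divisor $\Sigma_v$ and the image of $J_v$, play no role in a top-degree computation.

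Next I would fix $\alpha\in\HH^2(\K_v,\ZZ)$ and use that $\Phi_v^{*}\colon\HH^{*}(\K_v,\ZZ)\to\HH^{*}(\underline Y_v,\ZZ)$ is a ring homomorphism, so $(\Phi_v^{*}\alpha)^6=\Phi_v^{*}(\alpha^6)$ and hence $\int_{\underline Y_v}(\Phi_v^{*}\alpha)^6=2\int_{\K_v}\alpha^6$ by the first step. Applying the Fujiki relation to both sides, with $2n=6$ and so $n=3$, this becomes
\[
c_{\underline Y_v}\cdot q_{\underline Y_v}(\Phi_v^{*}\alpha)^3=2\,c_{\K_v}\cdot q_{\K_v}(\alpha)^3 ,
\]
where $c_{\underline Y_v}$ and $c_{\K_v}$ denote the Fujiki constants of $\underline Y_v$ and $\K_v$.

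It then remains to plug in the two Fujiki constants. Since $\underline Y_v$ is of $\K3^{[3]}$-type, $c_{\underline Y_v}=\tfrac{6!}{2^{3}\cdot 3!}=15$; since $\K_v$ is of $\OG_6^s$-type, it admits the symplectic resolution $\widetilde\K_v$, an ihs manifold of $\OG_6$-type, so by \autoref{rem BBF and Fujiki invariant up to resolution} together with the deformation invariance of the Fujiki constant, $c_{\K_v}=c_{\widetilde\K_v}=60$. Substituting these values yields $q_{\underline Y_v}(\Phi_v^{*}\alpha)^3=8\,q_{\K_v}(\alpha)^3$, and since $x\mapsto x^3$ is injective on the real numbers, $q_{\underline Y_v}(\Phi_v^{*}\alpha)=2\,q_{\K_v}(\alpha)$ for every $\alpha\in\HH^2(\K_v,\ZZ)$, which is the assertion.

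The only point I expect to require genuine care is the degree-$2$ pushforward $\Phi_{v*}[\underline Y_v]=2[\K_v]$ on the \emph{singular} variety $\K_v$, together with the compatibility of $\int$ with cap products; this is nonetheless standard for generically finite proper surjective morphisms. One could also avoid the numerical input $c_{\OG_6}=60$: the argument above already gives $q_{\underline Y_v}(\Phi_v^{*}\alpha)=\lambda\,q_{\K_v}(\alpha)$ with $\lambda$ the positive cube root of $2c_{\K_v}/c_{\underline Y_v}$, and $\lambda=2$ can then be pinned down by evaluating both sides on a single class whose BBF square can be computed independently on $\K_v$ and on $\underline Y_v$, e.g.\ using the realization of $\underline Y_v$ as a moduli space of sheaves on a $\K3$ surface.
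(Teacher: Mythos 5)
Your proposal is correct and follows essentially the same route as the paper: apply the Fujiki relation on both $\underline Y_v$ and $\K_v$, use that $\Phi_v$ is generically $2{:}1$ so that $\int_{\underline Y_v}(\Phi_v^*\alpha)^6=2\int_{\K_v}\alpha^6$, and plug in $c_{\underline Y_v}=15$ and $c_{\K_v}=c_{\widetilde\K_v}=60$ to extract the factor $2$ from the cubes. The only difference is that you spell out the pushforward/projection-formula justification of the degree-$2$ integration identity, which the paper leaves implicit.
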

\begin{proof}
We have
\begin{align*}
    c_{\underline Y_v}\cdot q_{\underline Y_v}(\Phi_v^*(\alpha))^3&=\Phi_v^*(\alpha)^6  \hspace{1.9cm}   \textnormal{by \autoref{thm BBF and Fujiki}.(2) on } \underline Y_v \\
    &= 2\cdot \alpha^6 \hspace{2.2cm} \textnormal{as } \Phi_v \textnormal{ is a generically 2:1 finite morphism}  \\
    &=2\cdot c_{\K_v}\cdot q_{\K_v}(\alpha)^3 \hspace{0.5cm} \textnormal{by \autoref{thm BBF and Fujiki}.(2) on } \K_v.
\end{align*}
As observed in \autoref{rem BBF and Fujiki invariant up to resolution}, we have \(c_{\K_v}=c_{\widetilde\K_v}\)\footnote{\cite[Theorem 1.7]{Per_Rap_the_second_integral} computes the Fujiki constant for any singular moduli space on a surface with trivial canonical bundle, but in this particular case, i.e. when the singular moduli space admits a symplectic resolution given by an ihs manifold, the computation is easier as the Fujiki constant equals the one of its resolution.}. As \(c_{\widetilde \K_v}=60\) \cite[Theorem 3.5.1]{rapagnettaOG6} and \(c_{\underline Y_v}=15\) \cite{beauville.varietes} we obtain the statement.
\end{proof}


\subsection{Positive uniruled divisors on \(\K_v\)}\label{subsection positive uniruled on K}
We start recalling the definition of  (polarized) locally trivial deformations of primitive symplectic varieties, that we will use in the main result of this section. We aslo introduce the related notions of moduli space of marked or polarized primitive symplectic varieties, as we are going to use them in the following sections.

\begin{definition}
A \textit{locally trivial family} is a flat proper morphism \(\pi:\mathcal X\to S\)  of complex spaces where \(S\) is connected and such that for any \(p\in \mathcal X\) there exist open neighborhoods \(\mathcal U\subseteq\mathcal X\) of \(p\) and \(S_p\subseteq S\) of \(\pi(p)\) such that 
\[
\mathcal U\cong U\times S_p
\]
over \(S_p\), where \(U:=\mathcal U\cap \pi^{-1}(\pi(p))\). A complex variety \(X_1\) is a  \textit{locally trivial deformation} of another complex varieties \(X_2\) if there exists a locally trivial family \(\pi:\mathcal X\to S\) and points \(s_1,s_2\in S\) such that \(\mathcal X_{s_1}\cong X_1\) and \(\mathcal X_{s_2}\cong X_2\).
\end{definition}

Every small locally trivial deformation of a primitive symplectic variety is a primitive symplectic variety \cite[Corollary 4.11]{BL18}. Let \(\delta\) be a fixed locally trivial deformation type of primitive symplectic varieties; a primitive symplectic variety of this class is called of \(\delta\)-type.

Let \(L\) be a rank \(n\) lattice of signature \((3,n-3)\). A \(L\)-\textit{marked primitive symplectic variety} is a pair \((X,\mu)\) with \(X\) a primitive symplectic variety and \(\mu\) an isometry \(\mu\colon \HH^2(X,\ZZ)_{tf}\to L\), called \(L\)-marking; here \(\HH^2(X,\ZZ)_{tf}\) has the BBF-lattice structure defined in \autoref{thm BBF and Fujiki}. An isometry of \(L\)-marked primitive symplectic varieties \((X,\mu)\) and \((X',\mu')\) is an isomoprhism \(f:X\to X'\) such that \(\mu'=\mu\circ f^*\), where \(f^*\) is the pullback on the second cohomology group.

Let \(\delta\) be a fixed locally trivial deformation type of primitive symplectic varieties. By \autoref{thm BBF and Fujiki} all primitive symplectic varieties of \(\delta\)-type have (the torsion-free part of) their integral second cohomology group isometric to the same abstract lattice \(L_\delta\).

\begin{definition}\label{def mod space of marked psv}
Let \(\mathfrak M_{L_\delta}\) be the \textit{moduli space of \(L_\delta\)-marked primitive symplectic varieties of \(\delta\)-type}, i.e. its elements are \(L_\delta\)-marked primitive symplectic varieties \((X,\mu)\) where \(X\) is of \(\delta\)-type.
\end{definition}
The moduli space \(\mathfrak M_{L_\delta}\) has a (non-necessarily Hausdorff) topology and a complex structure obtained by gluing together the Kuranishi spaces \(\Def(X)^{lf}\) of locally trivial deformations of a primitive symplectic variety of \(\delta\)-type \(X\), which are smooth of dimension \(h^{1,1}(X)\) \cite[Theorem 4.7]{BL18}; the crucial point in the gluing is that miniversal locally trivial deformations are in fact universal \cite[Lemma 4.9]{BL18}. It follows that \(\mathfrak M_{L_\delta}\) is a smooth manifold.

A polarized variety is a pair \((X,h)\) where \(X\) is a projective variety and \(h\) is a primitive polarization, i.e. the first Chern class of a primitive ample line bundle on \(X\). 

\begin{definition}
A \textit{polarized locally trivial family} is a locally trivial family \(\pi:\mathcal X\to S\) together with a line bundle \(\mathcal L\in\Pic(\mathcal X)\). A polarized variety \((X_1,h_1)\) is a \textit{polarized locally trivial deformation} of another polarized variety \((X_2,h_2)\) if there exists a polarized locally trivial family \(\pi:\mathcal X\to S\), \(\mathcal L\in \Pic(\mathcal X)\) and points \(s_1,s_2\in S\) such that \((\mathcal X_{s_1},c_1(\mathcal L_{s_1}))\cong (X_1,h_2)\) and \((\mathcal X_{s_2},c_1(\mathcal L_{s_2}))\cong (X_2,h_2)\) as polarized pairs.
\end{definition}

A polarized primitive symplectic variety is a polarized variety \((X,h)\) such that \(X\) is primitive symplectic.

\begin{definition}\label{def mod space of polarized psv}
For any \(d\in \NN\) let \(\mathfrak M_{\delta, d}\) be the coarse \textit{moduli space of polarized primitive symplectic varities of \(\delta\)-type and degree \(d\)}, i.e. its elements are polarized primitive symplectic varieties \((X,h)\) where \(X\) is of \(\delta\)-type and the BBF-square of \(h\) equals \(d\).
\end{definition}

By \cite[Proposition 8.7 and Lemma 8.8]{BL18} the space \(\mathfrak M_{\delta, d}\) is a quasi-projective scheme. Finally, we call 
\[
\mathfrak M_\delta:=\amalg_{d\in\NN}\mathfrak M_{\delta, d}.
\]
Observe that two polarized primitive symplectic varieties of \(\delta\)-type that are one deformation of the other (as polarized varieties) belong to the same component \(\mathfrak M_{\delta, d}\) of \(\mathfrak M_\delta\), but the viceversa is not true in general (see \autoref{subsection monodromy_OG6_eichler}). In what follows we will be interested in the case \(\delta=\OG_6^s\), i.e. to the moduli space \(\mathfrak M_{\OG_6^s}\).

We observe here a fact we are going to use frequently in what follows. 
\begin{remark}\label{rem_pos2ample}
Let \(X\) be a primitive symplectic variety  and \(h\) an effective class that is positive with respect to the BBF-form of \(X\). Consider a small locally trivial deformation \((X',h')\) of the pair \((X,h)\) such that \(\Pic(X')\cong\ZZ\) and \(h'\) is still effective; as the BBF-form is deformation invariant, the class \(h'\) is positive on \(X'\). By the projectivity criterion \cite[Theorem 3.11]{huybrechts}\footnote{This is the reference in the smooth setting, i.e. for ihs manifolds.}\cite[Theorem 1.2]{BL18} we conclude that \(X'\) is projective, hence \(h'\) is ample. We conclude that a pair \((X,h)\) as above always deforms to a pair \((X',h')\) with \(h'\) ample class on the primitive symplectic variety \(X'\). Observe that if the starting variety \(X\) is an ihs manifold then the variety \(X'\) is also ihs manifold, as it is a deformation of the first one \cite{beauville.varietes}.
\end{remark}

\begin{definition}\label{def uniruled}
Let \(D\subseteq X\) be an irreducible divisor. We say that \(D\) is uniruled if there exists a \(\dim(D)-1\) variety \(Y\) and a finite dominant morphism \(\PP^1\times Y\dashrightarrow D\).
\end{definition}

We are finally ready to prove the main result of this section.

\begin{theorem}\label{thm_positive}
Let \(\K_v\) be an \(\OG_6^s\) moduli space, with BBF-form \(q_{\K_v}\). For any effective and \(q_{\K_v}\)-positive\footnote{A divisor is positive if so is its first Chern class.} divisor \(D\) on \(\K_v\) there exists a positive integer \(m\) such that the linear system \(|mD|\) contains an irreducible uniruled divisor. 
\end{theorem}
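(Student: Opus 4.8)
The plan is to transport the problem from the singular O'Grady moduli space $\K_v$ to the $\K3^{[3]}$-type ihs manifold $\underline Y_v$ via the generically $2{:}1$ morphism $\Phi_v\colon\underline Y_v\to\K_v$ of \eqref{eq.Phi}, use the known existence of uniruled divisors in ample (or positive effective) classes on $\K3^{[n]}$-type manifolds, and then push the ruling curves back down to $\K_v$. Concretely, start with an effective $q_{\K_v}$-positive divisor $D$ on $\K_v$ and pull it back to $D':=\Phi_v^*D$ on $\underline Y_v$, which is effective since $\Phi_v$ is a dominant regular morphism. By \autoref{prop Phi* on the lattices} the class $c_1(D')$ has $q_{\underline Y_v}(c_1(D'))=2q_{\K_v}(c_1(D))>0$, so $D'$ is a positive effective class on a $\K3^{[3]}$-type ihs manifold. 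By \autoref{rem_pos2ample} we may pass to a small polarized locally trivial deformation in which $D'$ becomes ample; the result of Charles--Mongardi--Pacienza \cite{CPM} produces, in a suitable multiple $|m_0 D'|$ on that deformation, an irreducible uniruled divisor, and the deformation result \cite[Theorem 1.1]{LMP} (equivalently \cite[Theorem 1.3]{LMP}, already available for ihs manifolds via \cite{CPM}) lets us transport the rational curves ruling it back to a uniruled divisor in $|m_0 D'|$ on $\underline Y_v$ itself — this is exactly the strategy already used for $\OG_{10}$-type in \cite{bertini2021rational}.

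Next I would descend the uniruled divisor $E\in|m_0D'|$ from $\underline Y_v$ to $\K_v$. The natural candidate downstairs is $\Phi_{v*}E$, or more robustly the image $\Phi_v(E)$ together with a careful multiplicity bookkeeping: since $\Phi_v$ is finite of degree $2$ away from the contracted locus $J_v$ (the $256$ copies of $\PP^3$), the divisor $E$ either maps $2{:}1$ or $1{:}1$ onto its image, and in either case $\Phi_v(\Gamma)$ for a ruling curve $\Gamma\subset E$ is again a rational curve (the image of $\PP^1$ under a morphism), so $\Phi_v(E)$ is again uniruled provided $E$ is not contracted — and $E$ cannot be contracted since $\dim E=5>3=\dim J_v$. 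One must check that the class of $\Phi_v(E)$ (or an appropriate multiple, to absorb the degree-$2$ factor and any ramification contribution along $\Sigma_v$) is proportional to $c_1(D)$ in $\HH^2(\K_v,\ZZ)$: this follows because $\Phi_v^*$ is injective on $\HH^2$ with image a finite-index sublattice (indeed $\Phi_v^*\Phi_{v*}=2\cdot\id$ up to corrections supported on $J_v$, which is codimension $3$ and hence invisible in $\HH^2$ after pushforward), so $\Phi_v^*(c_1(\Phi_{v*}E))$ and $\Phi_v^*(\text{mult}\cdot c_1(D))$ agree and injectivity gives the equality downstairs. Thus $|mD|$ contains $\Phi_{v*}E$ for a suitable $m$ divisible by $m_0$.

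The remaining point is irreducibility of the divisor we land on in $|mD|$: replacing $D$ by a multiple, the positivity of $c_1(D)$ together with the structure of the effective cone means a general member of $|mD|$ for $m\gg0$ is irreducible, and one can arrange the uniruled member to be irreducible by passing to an irreducible component of $\Phi_v(E)$ dominated by the rulings — a positive-dimensional family of rational curves covering $\Phi_v(E)$ must cover at least one irreducible component, and that component is then an irreducible uniruled divisor whose class is still proportional to $c_1(D)$ up to a further multiple. I expect the main obstacle to be precisely this descent step: controlling how the divisor class and its ruling behave under $\Phi_v$ across the ramification divisor $\Sigma_v$ and the contracted locus $J_v$, i.e.\ verifying that pushing forward the uniruled divisor does not destroy irreducibility and keeps the class proportional to $h$. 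The lattice computation of \autoref{prop Phi* on the lattices} is the key technical lever that makes this bookkeeping work, since it pins down $\Phi_v^*$ as a scaled isometry and hence forces the class identities downstairs.
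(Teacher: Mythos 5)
Your proposal follows essentially the same route as the paper's proof: pull \(D\) back along \(\Phi_v\), use \autoref{prop Phi* on the lattices} to get positivity on the \(\K3^{[3]}\)-type manifold \(\underline Y_v\), deform to an ample class via \autoref{rem_pos2ample}, apply Charles--Mongardi--Pacienza to produce an irreducible uniruled divisor, transport it back by the deformation result, and take its image under \(\Phi_v\) in \(\K_v\). Your final worry about irreducibility downstairs is superfluous (and passing to a component of \(\Phi_v(E)\) would risk losing proportionality of the class): since \(E\) is irreducible and \(\Phi_v\) is a regular morphism, the image with its reduced structure is automatically irreducible, which is exactly how the paper concludes.
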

\proof
Consider the generically \(2 : 1\) regular morphism defined in \eqref{eq.Phi}:
\[
\Phi_v\colon\underline Y_v \to \K_v
\]
where \(\underline Y_v\) is an ihs manifold of \(\K3^{[3]}\)-type. As \(c_1(D)\in \HH^2(\K_v,\ZZ)\) is by assumption a \(q_{\K_v}\)-positive class, the pull-back \(\Phi_v^*c_1(D)\in \HH^2(\underline Y_v,\ZZ)\) is  \(q_{\underline Y_v}\)-positive by \autoref{prop Phi* on the lattices}. By Remark \ref{rem_pos2ample} there exists a deformation \((Y'_v,c_1(D)')\) of \((\underline Y_v,\Phi_v^*c_1(D))\) such that \(D'\) is an ample divisor. Using \cite[Theorem 1.1 and Remark 1.3(iii)]{CPM} on the polarized \(\K3^{[3]}\)-type ihs manifold \((Y'_v,c_1(D'))\) we deduce that there exists a positive integer \(k\) such that the linear system \(|kD'|\) contains an irreducible uniruled divisor. Using the deformation result \cite[Corollary 3.5]{CPM} we conclude that a also multiple of the original class \(\Phi_v^*c_1(D)\) has an irreducible representative \(D''\) that is uniruled. Then the divisor \(\Phi_v(D'')\) on \( \K_v\), taken with reduced schematic structure, is irreducible and uniruled, and its class in \(\HH^2(\K_v,\ZZ)\) is a multiple of \(c_1(D)\).
\endproof


\section{Monodromy orbits}\label{section.oribts}

In this section we prove our main result \autoref{main thm}, which states that any point of \(\mathfrak M_{\OG_6^s}\) corresponds to a polarized \(\OG_6^s\)-type primitive symplectic variety whose polarization is proportional to the first Chern class of a uniruled divisor. The first step in this direction is  to characterize the connected components of \(\mathfrak M_{\OG_6^s}\), which are given by the locally-trivial monodromy group.


\subsection{Monodromy of \(\OG_6^{s}\) and Eichler's criterion}\label{subsection monodromy_OG6_eichler}
Let \(X\) be a primitive symplectic variety of \(\OG_6^s\)-type. We denote by \(\Mon^2(X)\) the locally trivial monodromy group, i.e. the group of automorphisms of \(\HH^2(X,\ZZ)\) given by parallel transport operators along locally triavial families (see \cite[Definition 2.11]{mongardi.rapagn.monOG6}). In holds that \(\Mon^2(X)\subseteq O^+(\HH^2(X,\ZZ))\) \cite[Lemma 2.12]{mongardi.rapagn.monOG6}, where \(O^+(\HH^2(X,\ZZ))\) is the group of orientation preserving isometries, which are the isometries preserving the orientation of the positive cone in \(\HH^2(X,\ZZ)\). The monodromy group is invariant up to locally trivial deformations, and we denote  by \(\Mon^2(\bL_{\OG_6^s})\) the monodromy group acting on \(\bL_{\OG_6^s}\) (see \autoref{ex lattice OG6} for the definition of \(\bL_{\OG_6^s}\)), defined via a \(\bL_{\OG_6^s}\)-marking \(\mu:\HH^2(X,\ZZ)\to\bL_{\OG_6^s}\). 

Connected components of the moduli space \(\mathfrak M_{\OG_6^s}\) are given by orbits under the action of the locally trivial monodromy group. In other words, two elements in \(\mathfrak M_{\OG_6}^s\) are in the same connected component if their polarizations, seen in \(\bL_{\OG_6^s}\), are in the same \(\Mon^2(\bL_{\OG_6^s})\)-orbit. 

The locally trivial monodromy group of primitive symplectic varieties of \(\OG_6^s\)-type is known, and as in the case of \(\OG_6\)-type ihs manifolds it turns out to be maximal.

\begin{proposition}[{\cite[Proposition 4.12]{mongardi.rapagn.monOG6}}]\label{prop Mon^2(OG6)} Let \(X\) be a primitive symplectic variety of \(\OG_6^{s}\)-type, then
\[
\Mon^{2}(X)=O^{+}(\HH^{2}(X,\mathbb{Z})).
\]
\end{proposition}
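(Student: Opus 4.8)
The goal is to prove \autoref{prop Mon^2(OG6)}, namely that for a primitive symplectic variety $X$ of $\OG_6^s$-type one has $\Mon^2(X)=O^+(\HH^2(X,\ZZ))$. Since $\Mon^2(X)\subseteq O^+(\HH^2(X,\ZZ))$ is already known by \cite[Lemma 2.12]{mongardi.rapagn.monOG6}, the plan is to establish the reverse inclusion. The natural strategy is to compare the singular case with the smooth $\OG_6$-type case, where the analogous statement (that the monodromy group is the full orientation-preserving orthogonal group) was proved by Mongardi--Rapagnetta. Concretely, I would exploit the fact that $\K_v$ (an $\OG_6^s$ moduli space) admits a symplectic resolution $\widetilde\pi\colon\widetilde\K_v\to\K_v$ with $\widetilde\K_v$ of $\OG_6$-type, and that by \autoref{prop cohomology sympl resol} together with \autoref{rem BBF and Fujiki invariant up to resolution} the pullback $\widetilde\pi^*\colon\HH^2(\K_v,\ZZ)\to\HH^2(\widetilde\K_v,\ZZ)$ is an isometric embedding of lattices. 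In fact, by \autoref{ex lattice OG6} the lattice $\HH^2(\K_v,\ZZ)\cong\bU^{\oplus3}\oplus[-2]$ while $\HH^2(\widetilde\K_v,\ZZ)$ is the $\OG_6$-lattice $\bU^{\oplus3}\oplus[-2]^{\oplus2}$, and $\widetilde\pi^*$ identifies $\HH^2(\K_v,\ZZ)$ with the orthogonal complement of the class $\widetilde\Sigma_v/2$ of the (halved) exceptional divisor.

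\textbf{Step 1: Transport of monodromy through the resolution.} First I would show that monodromy operators on $\widetilde\K_v$ that fix the exceptional class $\widetilde\Sigma_v$ (equivalently, preserve the sublattice $\widetilde\pi^*\HH^2(\K_v,\ZZ)$) restrict to locally trivial monodromy operators on $\K_v$. The point is that a locally trivial deformation of $\K_v$ lifts (by \cite{lehnsorger}, since $\widetilde\K_v=\Bl_{\Sigma_v}\K_v$ and the singular locus deforms along) to a deformation of $\widetilde\K_v$, so parallel transport operators on $\K_v$ are exactly the restrictions to $\widetilde\pi^*\HH^2(\K_v,\ZZ)$ of parallel transport operators on $\widetilde\K_v$ that preserve this sublattice. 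This identifies $\Mon^2(\K_v)$ with a subgroup of the stabilizer in $\Mon^2(\widetilde\K_v)$ of the class $\widetilde\Sigma_v$, restricted to the orthogonal complement.

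\textbf{Step 2: Compute the target group via the Eichler criterion.} Next I would use the fact that $\Mon^2(\widetilde\K_v)=O^+(\HH^2(\widetilde\K_v,\ZZ))$ (the smooth $\OG_6$ case, \cite{mongardi.rapagn.monOG6}), so the image in question is the group of orientation-preserving isometries of the $\OG_6$-lattice fixing $\widetilde\Sigma_v$, restricted to $\widetilde\Sigma_v^\perp=\bU^{\oplus3}\oplus[-2]$. I would then invoke Eichler's criterion: since $\bU^{\oplus3}\oplus[-2]$ contains $\bU^{\oplus2}$ as an orthogonal summand, any isometry of this lattice that acts trivially on the discriminant group extends (by Eichler-type transvection arguments, as in \cite{GHS} or the treatment in \cite{mongardi.rapagn.monOG6}) to an isometry of the overlattice fixing the vector $\widetilde\Sigma_v$. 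Combined with the fact that the discriminant form of $\bU^{\oplus3}\oplus[-2]$, namely $[\tfrac32]\in\QQ/2\ZZ$ on $\ZZ/2\ZZ$ (see \eqref{eq discriminant form}), has trivial automorphism group, this shows that $O^+(\bU^{\oplus3}\oplus[-2])$ is precisely the group of restrictions arising this way. Hence $\Mon^2(X)\supseteq O^+(\HH^2(X,\ZZ))$, and together with the reverse inclusion we conclude equality; finally, since $\Mon^2$ is a locally trivial deformation invariant, the statement passes from $\K_v$ to an arbitrary $X$ of $\OG_6^s$-type.

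\textbf{Main obstacle.} The delicate point is Step 1: showing that \emph{every} orientation-preserving isometry of $\HH^2(X,\ZZ)$ is realized by parallel transport, not merely that it extends abstractly to the $\OG_6$-lattice. One must check that the lift of a locally trivial family of $\K_v$'s to a family of resolutions $\widetilde\K_v$'s exists and is again a (smooth) family, so that the induced isometry on $\HH^2(\widetilde\K_v,\ZZ)$ is genuinely monodromy; this requires the simultaneous resolvability of the family, which follows from the explicit description $\widetilde\K_v=\Bl_{\Sigma_v}\K_v$ and the fact that $\Sigma_v$ deforms flatly in locally trivial families (so the blow-up behaves well in families). The rest — the Eichler-criterion lattice bookkeeping and the triviality of the relevant discriminant-form automorphisms — is essentially the same computation already carried out in \cite{mongardi.rapagn.monOG6} for the smooth case, adapted to the orthogonal complement of $\widetilde\Sigma_v$.
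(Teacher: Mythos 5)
First, note that the paper does not prove this statement at all: it is quoted verbatim from \cite[Proposition 4.12]{mongardi.rapagn.monOG6}, so there is no internal proof to compare with; your proposal has to be measured against the argument in that reference, whose overall skeleton (reduce to the smooth \(\OG_6\) case through the symplectic resolution \(\widetilde\pi\colon\widetilde\K_v\to\K_v\) and extend isometries across the splitting \(\HH^2(\widetilde\K_v,\ZZ)\cong\widetilde\pi^*\HH^2(\K_v,\ZZ)\oplus_\perp\ZZ\cdot a\)) you have correctly identified.

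The genuine gap is in your Step 1, and it is precisely the non-trivial content of the cited proposition. What you need is the inclusion: every orientation-preserving isometry of \(\HH^2(\widetilde\K_v,\ZZ)\) fixing the exceptional class, restricted to \(\widetilde\pi^*\HH^2(\K_v,\ZZ)\), lies in \(\Mon^2(\K_v)\). But the justification you give (a locally trivial family of \(\K_v\)'s lifts to a family of blow-ups along the deforming singular loci) proves only the \emph{opposite} containment, namely that \(\Mon^2(\K_v)\) is contained in the restrictions of monodromy operators of \(\widetilde\K_v\) preserving the sublattice. For the direction you actually need, take \(g\in O^+(\HH^2(\K_v,\ZZ))\) and extend it by the identity on \(\ZZ\cdot a\) to \(\hat g\in O^+(\HH^2(\widetilde\K_v,\ZZ))\); the smooth-case theorem of Mongardi--Rapagnetta only tells you that \(\hat g\) is parallel transport along \emph{some} family of \(\OG_6\)-type manifolds. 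Nothing guarantees that along this particular family the class \(a\) stays of type \((1,1)\), that the prime exceptional divisor \(\widetilde\Sigma_v\) deforms as a divisor, or that there exists a simultaneous fiberwise contraction yielding a locally trivial family of \(\OG_6^s\)-type varieties realizing \(g\). Establishing this descent --- via the correspondence between locally trivial deformations of \(\K_v\) and deformations of the pair \((\widetilde\K_v,\widetilde\Sigma_v)\), the fact that the divisor deforms along the Hodge locus of its class, and the existence of the contraction in families, combined with a moduli/Torelli-type argument showing every element of the stabilizer is realized by such a family --- is what the reference actually does, and your sketch asserts it rather than proves it. Your "main obstacle" paragraph again addresses the upward lifting of families of \(\K_v\)'s, which is the easy and, for your purpose, wrong direction.

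A minor remark on Step 2: the Eichler criterion and the discriminant-form discussion are unnecessary. By Rapagnetta's computation, \(\HH^2(\widetilde\K_v,\ZZ)\) is the \emph{orthogonal direct sum} of \(\widetilde\pi^*\HH^2(\K_v,\ZZ)\cong\bU^{\oplus3}\oplus[-2]\) and \(\ZZ\cdot a\) with \(a^2=-2\), so any isometry of the first summand extends by the identity on \(\ZZ a\) with no compatibility condition on discriminant groups, and it is orientation preserving because a positive three-space can be chosen inside \(\bU^{\oplus3}\subseteq a^{\perp}\). The lattice bookkeeping is therefore immediate; the substance of the proposition lies entirely in the deformation-theoretic descent discussed above.
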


Thanks to the previous result, the following criterion gather a direct way to compute the \(\Mon^2(\bL_{\OG_6^s})\)-orbits of elements in the lattice \(\bL_{\OG_6^s}\).

\begin{proposition}[Eichler's criterion, {\cite[Proposition 3.3(i)]{GHS_abel}}]\label{prop Eichler}
Let \(L\) be an even lattice that contains two orthogonal copies of the hyperbolic plane, and assume that the discrimitant \(\bA_L\) is a cyclic group. If two vectors \(u\) and \(v\) in \(L\) have the same square and divisibility in \(L\), then there exist a transvection\footnote{For the definition of transvection, or Eichler transvection, we refer to \cite[Lemma 3.1]{GHS_abel}} \(\tau\) such that \(\tau(u)=v\).
\end{proposition}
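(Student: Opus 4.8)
The plan is to reproduce Eichler's classical argument, which shows that already the subgroup of \(O^{+}(L)\) generated by \emph{Eichler transvections} acts transitively on the set of vectors of prescribed square and divisibility. Recall that for an isotropic vector \(e\in L\) and \(a\in e^{\perp}\) one has the transvection
\[
t(e,a)\colon x\longmapsto x+(e,x)\,a-(a,x)\,e-\tfrac12(a,a)(e,x)\,e,
\]
which is an isometry of \(L\otimes\QQ\), lies in \(O^{+}\), and preserves \(L\) as soon as \(e\) is a primitive isotropic vector of divisibility \(1\). Since \(\bU\) is unimodular and \(L\) contains two orthogonal copies of it, I would fix a decomposition \(L=\bU_1\oplus\bU_2\oplus K\) with \(K=(\bU_1\oplus\bU_2)^{\perp}\), take standard bases \(e_i,f_i\) of \(\bU_i\), and work inside the subgroup \(E(L)\le O^{+}(L)\) generated by the \(t(e,a)\) with \(e\in\{e_1,f_1,e_2,f_2\}\) and \(a\in e^{\perp}\); each such generator preserves \(L\), and it suffices to produce \(\tau\in E(L)\) carrying \(u\) to \(v\).

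The heart of the argument is a normal form. After reducing to the case of primitive vectors (the imprimitivity index of a vector is recovered from its square and divisibility once \(\bA_L\) is cyclic, so the general case follows from the primitive one), one writes a primitive \(w=a_1e_1+b_1f_1+r\) with \(r\in\bU_2\oplus K\) and checks that \(t(e_1,a)\), resp.\ \(t(f_1,a)\), for \(a\in\bU_2\oplus K\) adds \(b_1a\), resp.\ \(a_1a\), to the component \(r\) while correcting the \(\bU_1\)-coordinates. Feeding the \(\bU_1\)- and \(\bU_2\)-transvections into one another one successively clears the part of the \(K\)-component lying outside a fixed cyclic piece, and then clears the \(\bU_2\)-component, arriving at a vector of the shape \(d\,e_1+c\,f_1+x_0\), where \(d=\mathrm{div}(w)\), the vector \(x_0\in K\) has class generating the (unique, since \(\bA_L\) is cyclic) order-\(d\) subgroup of \(\bA_L\), and \(c\in\ZZ\) is then pinned down by the value \(w^2\). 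This normal form depends only on \(w^2\) and \(\mathrm{div}(w)\); applying it to \(u\) and to \(v\) and composing the two resulting elements of \(E(L)\) yields the desired \(\tau\).

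The step I expect to be the real obstacle is the bookkeeping with the discriminant data: one must verify that equal divisibility of \(u\) and \(v\) forces the classes \([u/\mathrm{div}(u)]\) and \([v/\mathrm{div}(v)]\) to be interchangeable by an element of \(E(L)\) — equivalently, that the normal-form vector \(x_0\) above is determined, up to the action of \(E(L)\), by \(d\) alone. This is precisely where both hypotheses enter: cyclicity of \(\bA_L\) makes the order-\(d\) subgroup unique and controls its quadratic form, while the presence of the \emph{second} hyperbolic plane supplies the room needed to absorb any remaining discrepancy. The auxiliary verifications — integrality of \(t(e,a)\), its membership in \(O^{+}\), and the explicit effect of the transvections on coordinates — are routine, and for these I would simply refer to \cite[\S 3]{GHS_abel}.
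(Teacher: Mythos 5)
Your plan breaks down exactly at the step you yourself flag as ``the real obstacle,'' and it is not a bookkeeping issue but an impossibility: every Eichler transvection \(t(e,a)\) with \(e\in L\) isotropic and \(a\in e^{\perp}\cap L\) satisfies \(t(e,a)(x)-x\in L\) for all \(x\in L^{\vee}\), so the group \(E(L)\) they generate lies in the stable orthogonal group and acts \emph{trivially} on \(\bA_L\). Hence no composition of transvections can ``interchange'' the classes \([u/\divi(u)]\) and \([v/\divi(v)]\); if these classes differ, \(u\) and \(v\) lie in different \(E(L)\)-orbits. The actual content of \cite[Proposition 3.3(i)]{GHS_abel} is transitivity of \(E(L)\) on \emph{primitive} vectors of fixed square \emph{and fixed class in} \(\bA_L\), so to obtain the version with ``divisibility'' you must show that equal square and divisibility already force equal classes. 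Cyclicity of \(\bA_L\) alone does not give this: the order-\(d\) subgroup is unique but has several generators, and equal squares only force equal values of \(q_{\bA_L}\), which pins the class down at best up to sign. Concretely, in \(\bU^{\oplus 2}\oplus[10]\) (so \(\bA_L\cong\ZZ/10\ZZ\), cyclic) the primitive vectors \(u=5e_1+20f_1+x\) and \(v=5e_1+5f_1+4x\), with \(x\) the generator of \([10]\), both have square \(210\) and divisibility \(5\), yet \([u/5]\neq[v/5]\) in \(\bA_L\); they are therefore not related by any composition of transvections, and your normal-form vector \(x_0\) is determined by the class \(u^{*}\), not by \(d\) alone. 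What makes the criterion usable where this paper needs it is that \(\bA_{\OG_6^s}\cong\ZZ/2\ZZ\) (\autoref{ex lattice OG6}): divisibility \(1\) forces the trivial class and divisibility \(2\) the unique nontrivial one, so GHS's ``fixed square and fixed class'' statement applies verbatim. The paper itself does not reprove the criterion but quotes it; a self-contained write-up should prove (or quote) that statement, for which your normal-form computation is the right tool, and then add this one-line discriminant observation as the bridge.

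Two further corrections. Your reduction to primitive vectors is also unjustified: square and divisibility do not detect the imprimitivity index even when \(\bA_L\) is cyclic. In \(\bU^{\oplus 2}\oplus[-8]\) the vectors \(u=2(e_1+f_1)\) and \(v=4e_1+2f_1+g\), with \(g\) the generator of \([-8]\), both have square \(8\) and divisibility \(2\), but \(v\) is primitive and \(u\) is not, so no isometry at all maps one to the other; the criterion must be stated for primitive vectors, as in \cite{GHS_abel}, which suffices for the paper since polarizations are primitive by definition. Finally, a minor point: \(t(e,a)\) preserves \(L\) for \emph{every} isotropic \(e\in L\) and \(a\in e^{\perp}\cap L\), since evenness of \(L\) makes \(\frac{1}{2}(a,a)\) integral, so the divisibility-one condition on \(e\) is superfluous (though harmless); the coordinate computations you defer to \cite[\S 3]{GHS_abel} are indeed routine, and the defect of the proposal lies not there but in the two points above.
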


\begin{corollary}\label{cor monodromy}
Connected components of the moduli space \(\mathfrak M_{\OG_6^s}\) are determined by the degree and the divisibility of the polarization of their elements.
\end{corollary}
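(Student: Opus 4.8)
The plan is to combine the two propositions that immediately precede the statement. First I would recall that, by the discussion in \autoref{subsection monodromy_OG6_eichler}, two polarized primitive symplectic varieties of \(\OG_6^s\)-type lie in the same connected component of \(\mathfrak M_{\OG_6^s}\) precisely when their polarizations, transported via markings to the abstract lattice \(\bL_{\OG_6^s}\), lie in the same orbit of the locally trivial monodromy group \(\Mon^2(\bL_{\OG_6^s})\). So the corollary is equivalent to the statement that two primitive classes \(u, v \in \bL_{\OG_6^s}\) that are polarizations lie in the same \(\Mon^2(\bL_{\OG_6^s})\)-orbit if and only if \(q_{\bL_{\OG_6^s}}(u) = q_{\bL_{\OG_6^s}}(v)\) and \(\divi(u) = \divi(v)\).

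For the nontrivial direction, I would observe that \(\bL_{\OG_6^s} \cong \bU^{\oplus 3} \oplus [-2]\) contains (in fact three) orthogonal copies of the hyperbolic plane \(\bU\), and that its discriminant group \(\bA_{\OG_6^s} \cong \ZZ/2\ZZ\) is cyclic (see \autoref{ex lattice OG6}); hence the hypotheses of Eichler's criterion \autoref{prop Eichler} are met. Thus if \(u\) and \(v\) have the same square and divisibility, there is an Eichler transvection \(\tau\) with \(\tau(u) = v\). By \autoref{prop Mon^2(OG6)} we have \(\Mon^2(\bL_{\OG_6^s}) = O^+(\bL_{\OG_6^s})\), so it suffices to check that \(\tau \in O^+(\bL_{\OG_6^s})\); Eichler transvections are products of reflections in vectors of positive square, hence are orientation-preserving, so \(\tau\) indeed lies in the monodromy group and \(u, v\) are in the same orbit. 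The converse direction is immediate: monodromy operators are isometries, so they preserve the square, and they preserve the divisibility since \(\divi(v)\) is defined purely in terms of the pairing \((v, \bL_{\OG_6^s})\).

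The main (and essentially only) obstacle is bookkeeping rather than mathematics: one must make sure that the equivalence ``same component \(\iff\) same monodromy orbit'' is applied to the primitive polarization classes themselves, and that the divisibility being referred to is the divisibility of the polarization inside \(\bL_{\OG_6^s}\) (this is marking-independent since all markings differ by an isometry). One should also note that, since a polarization is by definition primitive, the discussion is really about primitive vectors, but Eichler's criterion applies to arbitrary vectors and in particular to primitive ones, so no extra care is needed there. Putting these pieces together gives the corollary with only a short argument.
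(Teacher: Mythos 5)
Your proposal is correct and follows essentially the same route as the paper: connected components correspond to locally trivial monodromy orbits of the polarization, \autoref{prop Mon^2(OG6)} gives maximality of the monodromy group, and \autoref{prop Eichler} applies because \(\bL_{\OG_6^s}\) contains two orthogonal hyperbolic planes and has cyclic discriminant group. The only cosmetic difference is your justification that transvections are orientation preserving (the paper simply cites \cite[\S 3.1]{GHS_abel}); strictly speaking they are products of two reflections in vectors of \emph{equal} square (not necessarily positive), whose spinor-norm contributions cancel, but the conclusion you need is the same.
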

\begin{proof}
Every transvection of \(\bL_{\OG_6^s}\) is an orientation preserving  isometry \cite[\S 3.1]{GHS_abel}, hence a monodromy operator by \autoref{prop Mon^2(OG6)}. The lattice \(\bL_{\OG_6^s}\) contains two orthogonal copies of the hyperbolic plane and it has cyclic discriminant group (see \autoref{ex lattice OG6}), hence we can appy \autoref{prop Eichler} obtaining that the orbit of a vector in \( \bL_{\OG_6^s}\) with respect to the monodromy group is determined by its degree and divisibility. 
\end{proof}

\subsection{Main results}\label{subsection main results} We start analyzing the possible combinations of degree and divisibility appearing in the lattice \(\bL_{\OG_6^s}\) introduced in \autoref{ex lattice OG6}. The goal of the first part of this section is to exhibit representatives for any possible combination of divisibility and positive square in \(\bL_{\OG_6^s}\), constructed as the class of an (up to multiple) effective divisor on some \(\OG_6^s\) moduli space \(\K_v\), see \autoref{prop invariants of classes on Kv}. 

\begin{lemma}\label{lemma_ type_d}
Let \((X,h)\) be a polarized primitive symplectic variety of \(\OG_6^{s}\) type, with BBF-form \(q_X\). We call \(q_X(h)=2d\); if \(\divi(h)=2\) in \(\HH^2(X,\ZZ)\) then \(d\equiv 3 \mod 4\). 
\end{lemma}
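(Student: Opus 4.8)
~\textbf{Proof strategy.}
The plan is to work inside the abstract lattice $\bL_{\OG_6^s}=\bU^{\oplus 3}\oplus[-2]$ from \autoref{ex lattice OG6}, using the fact that the divisibility and the discriminant class of $h$ are lattice-theoretic invariants. By \autoref{cor monodromy} we may replace $(X,h)$ by any representative with the same degree and divisibility, so it suffices to analyze vectors $h\in\bL_{\OG_6^s}$ with $\divi(h)=2$.

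First I would record what $\divi(h)=2$ means. Since $\bU^{\oplus 3}$ is unimodular and $[-2]=\ZZ\sigma$ with $\sigma^2=-2$, the discriminant group $\bA_{\OG_6^s}\cong\ZZ/2\ZZ$ is generated by $[\sigma/2]$. A class $h=w+a\sigma$ with $w\in\bU^{\oplus 3}$ has $(h,\bL_{\OG_6^s})=\gcd\bigl((w,\bU^{\oplus 3}),\,(a\sigma,\sigma)\bigr)=\gcd(\text{content of }w,\,2a)$; hence $\divi(h)=2$ forces $w\in 2\bU^{\oplus 3}$ and $a$ odd, i.e. $h=2w'+a\sigma$ with $w'\in\bU^{\oplus 3}$ and $a$ odd. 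Equivalently, $[h/2]=[\sigma/2]$ is the nontrivial class in $\bA_{\OG_6^s}$.

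Next I would compute $q_X(h)$ modulo $8$ using this normal form. We have $q_X(h)=4\,q_{\bU^{\oplus 3}}(w')+a^2\sigma^2=4\,q_{\bU^{\oplus 3}}(w')-2a^2$. Since $\bU^{\oplus 3}$ is an even lattice, $q_{\bU^{\oplus 3}}(w')$ is even, so $4\,q_{\bU^{\oplus 3}}(w')\equiv 0\pmod 8$; and since $a$ is odd, $a^2\equiv 1\pmod 8$, so $-2a^2\equiv -2\equiv 6\pmod 8$. Therefore $q_X(h)\equiv 6\pmod 8$. Writing $q_X(h)=2d$ this gives $2d\equiv 6\pmod 8$, i.e. $d\equiv 3\pmod 4$, which is the claim. (Alternatively, and equivalently, one reads this off the discriminant form: $q_{\bA_{\OG_6^s}}([\sigma/2])=[3/2]\in\QQ/2\ZZ$ by \eqref{eq discriminant form}, and $q_X(h)\equiv \divi(h)^2\, q_{\bA_{\OG_6^s}}([h/\divi(h)])=4\cdot[3/2]=[6]\pmod{2\divi(h)^2=8}$.)

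There is essentially no obstacle here: the only point requiring a little care is the passage from "$\divi(h)=2$" to the normal form $h=2w'+a\sigma$ with $a$ odd, which uses unimodularity of $\bU^{\oplus 3}$ together with the description of divisibility in an orthogonal direct sum. Once that is in place, the congruence $d\equiv 3\pmod 4$ is immediate from evenness of $\bU^{\oplus 3}$ and $a^2\equiv 1\pmod 8$.
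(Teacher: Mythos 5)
Your argument is correct and is essentially the paper's: both proofs come down to the fact that \(\divi(h)=2\) makes \([h/\divi(h)]\) the nontrivial class of \(\bA_{\OG_6^s}\), whose discriminant square \([\tfrac{3}{2}]\) forces \(2d\equiv 6 \pmod 8\); your explicit normal-form computation (and your parenthetical) is just the direct version of the paper's exclusion of \(d\equiv 0,1,2 \pmod 4\). One small imprecision: the gcd formula alone does not force \(a\) odd --- e.g.\ \(h=2w'+2\sigma\) with \(w'\) primitive in \(\bU^{\oplus 3}\) has divisibility \(2\) with \(a\) even --- so you must also invoke that \(h\), being a polarization, is primitive: once \(w\in 2\bU^{\oplus 3}\), an even \(a\) would make \(h\) divisible by \(2\), and indeed the stated congruence fails for such non-primitive classes of divisibility \(2\).
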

\proof
Fix a marking \(\eta\colon \HH^{2}(X,\mathbb{Z}) \to \bL_{\OG_6^{s}}\); by abuse of notations we call \(h\) the class \(\eta(h)\in \bL_{\OG_6^s}\). Throughout the proof we assume \(\divi(h)=2\). Consider the class \([\frac{h}{\divi(h)}]\in \bA_{\OG_6^s}\). By \eqref{eq discriminant form}, the possible values of the induced quadratic form \(q_{\bA_{\OG_6^s}}\) on \(\bA_{\OG_6^s}\) are \(\{0,\frac{3}{2}\} \subset \frac{\QQ}{2\ZZ}\)  and the only class of square \(0\) is the zero class of the discriminant group.  Assume that \(d\equiv 0 \mod  4 \);  it follows that 
\[
q_{\bA_{\OG_6^s}}\Bigl(\Bigl[\frac{h}{\divi(h)}\Bigl]\Bigl)=\frac{d}{2}=0 \in \frac{\QQ}{2\ZZ}
\] 
 hence  \([\frac{h}{\divi(h)}]\) is the zero class of \(\bA_{\OG_6^s}\); we conclude that \(\frac{h}{\divi(h)}\) is an element of the lattice \( \bL_{\OG_6}\) hence \(\divi(h)=1\), which contradicts our assumption. If \(d\equiv 1\) or \(2 \mod  4 \) then the same computation shows that the class \([\frac{h}{\divi(h)}]\) has \(q_{\bA_{\OG_6^s}}\)-square \(\frac{1}{2}\) or \(1\in \frac{\QQ}{2\ZZ}\) respectively,  and we have already noticed that this case does not occur.
\endproof

\begin{proposition}\label{prop invariants of classes on Kv} Let \((X,h)\) be a polarized primitive symplectic variety of \(\OG^s_6\)-type, with BBF-form \(q_X\); we call \(q_X(h)=2d\) and \(\divi(h)=e\) in \(\HH^2(X,\ZZ)\). Then there exists a \(\OG_6^s\) moduli space \(\K_v\) and a class \(\alpha\in \HH^2(\K_v,\ZZ)\) such that \(q_{\K_v}(\alpha)=2d\) and \(\divi(\alpha)=e\) in \(\HH^2(\K_v,\ZZ)\), where \(q_{\K_v}\) is the BBF-form on \(\K_v\). Furthermore, \(\alpha\) or \(2\alpha\) is an effective class.
\end{proposition}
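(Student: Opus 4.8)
The plan is to realise each admissible pair of invariants by an explicit lattice vector on a well-chosen \(\OG_6^s\) moduli space, using the Mukai-lattice description of its second cohomology, and then to obtain the effectivity from the Mongardi--Rapagnetta--Saccà morphism \(\Phi_v\) of \eqref{eq.Phi} (or from a determinant divisor on the abelian surface). First I would fix an abelian surface \(A\) with \(\NS(A)=\ZZ\ell\), \(\ell^2=2t\), and a primitive Mukai vector \(w=(r_0,m_0\ell,s_0)\) with \(w^2=2m_0^2t-2r_0s_0=2\), so that \(v:=2w\) defines an \(\OG_6^s\) moduli space \(\K_v\) by \autoref{thm.K_v}.(2). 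By the Mukai homomorphism (see \cite{PR:OG10}, as already used in \autoref{ex lattice OG6}) there is a BBF-isometry \(\HH^2(\K_v,\ZZ)\cong v^\perp\subset\HH^{2*}(A,\ZZ)\), under which the algebraic part \(\NS(\K_v)\) corresponds to \(v^\perp\cap(\HH^0(A)\oplus\NS(A)\oplus\HH^4(A))\), a rank-two lattice whose elements are Mukai vectors \((r,m\ell,s)\). By \autoref{cor monodromy} and \autoref{lemma_ type_d}, the admissible pairs are exactly \((2d,1)\) for all \(d\ge 1\) and \((2d,2)\) for \(d\equiv 3\pmod{4}\); and since the monodromy orbit of a class is determined by degree and divisibility, it suffices to produce, for each such pair, one triple \((t,w,\alpha)\) with \(\alpha\in v^\perp\), \(\alpha^2=2d\), \(\divi(\alpha)=e\) in \(\HH^2(\K_v,\ZZ)\), and \(\alpha\) or \(2\alpha\) effective.

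For the effectivity I would exploit that \(\Phi_v\colon\underline Y_v\to\K_v\) is a finite surjective morphism of degree \(2\) between normal varieties which is an isomorphism in codimension one away from its branch and exceptional loci, so that \(\Phi_{v*}\Phi_v^{*}=2\cdot\mathrm{id}\) on \(\HH^2\). Hence, whenever \(\Phi_v^{*}\alpha\) is represented by an effective divisor \(E\) on the \(\K3^{[3]}\)-type manifold \(\underline Y_v\), the pushforward \(\Phi_{v*}E\) is an effective divisor on \(\K_v\) of class \(2\alpha\); and \autoref{prop Phi* on the lattices} gives \(q_{\underline Y_v}(\Phi_v^{*}\alpha)=2q_{\K_v}(\alpha)=4d>0\), so \(\Phi_v^{*}\alpha\) lies in the positive cone of \(\underline Y_v\) and its effectivity can be checked against the known effective divisors on \(\K3^{[n]}\)-type manifolds (classes pulled back from the underlying \(\K3\), the Hilbert--Chow boundary, Riemann--Roch). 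Alternatively, when \(w\) has positive rank, one may take \(\alpha\) to be directly the class, restricted from \(\M_v\) to the fibre \(\K_v\), of a determinant (theta) divisor attached to a sufficiently positive line bundle on \(A\), which is effective by construction; in either approach, identifying the resulting class inside \(v^\perp\) via the Mukai homomorphism is a computation. The divisibility-\(2\) case is exactly the one for which only \(2\alpha\) need be effective — just as for the \((-2)\)-class \(\sigma\) of \autoref{ex lattice OG6}, half of a boundary-type effective divisor need not be effective — which is why the statement allows the factor of two.

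The remaining work is Diophantine bookkeeping. For \(e=1\) one can take, for instance, \(\ell^2=2(d+1)\), \(w=(1,\ell,d)\), \(v=2w\) and \(\alpha=(1,0,-d)\in v^\perp\): then \(\alpha^2=2d\) and \(\divi(\alpha)=1\) in \(\HH^2(\K_v,\ZZ)\), and one is reduced to checking that this \(\alpha\) is effective. For \(e=2\), writing \(d=4k-1\) with \(k\ge 1\), one looks (using \autoref{ex lattice OG6} and \eqref{eq discriminant form}) for a class of the form \(2u+\sigma\) in \(\HH^2(\K_v,\ZZ)\cong\bU^{\oplus 3}\oplus[-2]\) with \(u^2=2k\): such a class has square \(2(4k-1)=2d\) and divisibility \(2\), and by \autoref{cor monodromy} it suffices to realise some such class, with twice it effective, on some \(\OG_6^s\) moduli space. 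In both cases the lattice-theoretic part is elementary once the effective representative has been pinned down.

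I expect the second step — the effectivity — to be the main obstacle. The statement only permits replacing \(\alpha\) by \(2\alpha\), not by an arbitrary multiple, so positivity of the class is not enough: it must be, up to a factor of two, the class of a genuine effective divisor, and producing such a divisor uniformly over every admissible degree and divisibility — while keeping \(v=2w\) of the precise shape forced by \autoref{thm.K_v}.(2) and keeping track of how the Mukai homomorphism translates effective divisors into lattice vectors — is the delicate point of the argument.
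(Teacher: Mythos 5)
Your lattice-theoretic reduction (divisibility can only be $1$ or $2$, and divisibility $2$ forces $d\equiv 3\bmod 4$ by \autoref{lemma_ type_d}) matches the paper, but the core of the proposition — producing a class whose first or second multiple is \emph{effective} — is exactly the step you leave open, and your suggested mechanisms do not close it. Positivity of $\Phi_v^*\alpha$ on the $\K3^{[3]}$-type manifold $\underline Y_v$ only gives bigness, i.e.\ effectivity of some unspecified multiple, which, as you yourself note, is not enough for the statement ``$\alpha$ or $2\alpha$ effective''; ``checking against the known effective divisors on $\K3^{[n]}$-type manifolds'' and ``identifying the theta divisor inside $v^\perp$ is a computation'' are placeholders, not arguments, and the effectivity of your explicit candidate $(1,0,-d)\in v^\perp$ is never verified. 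So as it stands the proposal proves the numerical part of the proposition but not the effectivity part, which is what the rest of the paper (via \autoref{thm_positive} and \autoref{rem_pos2ample}) actually consumes.

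The paper's proof sidesteps all of this by working on the single moduli space $\K_{(2,0,-2)}(A)$ and using two divisor classes that are effective by construction: the image $h_A$ of a polarization of $A$ under the primitive embedding $\HH^2(A,\ZZ)\hookrightarrow\HH^2(\K_{(2,0,-2)}(A),\ZZ)$, and the class $b=\tfrac12 c_1(2B)$ of square $-2$, where $B$ is the Weil divisor of non-locally free sheaves, so that $2b$ is effective even though $b$ itself need not be. For $e=1$ one simply takes $A$ of degree $2d$ and $\alpha=h_A$ (effective, divisibility $1$, square $2d$, with $d$ arbitrary). For $e=2$ one takes $A$ of degree $2$ and $\alpha_n=(n+1)h_A+nb$ with $n$ odd: then $2\alpha_n$ is effective, $\divi(\alpha_n)=2$, and $q(\alpha_n)=2(2n+1)$ realizes every $d\equiv 3\bmod 4$, which by \autoref{lemma_ type_d} is all that can occur. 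If you want to salvage your approach, this is the missing ingredient: an explicit supply of effective classes on a concrete $\K_v$ (here $h_A$ and $2B$, via the Perego--Rapagnetta decomposition $\HH^2(\K_{(2,0,-2)}(A),\ZZ)=\mu(\HH^2(A,\ZZ))\oplus_\perp\ZZ\cdot b$), rather than an appeal to positivity or to an unspecified determinant-divisor computation.
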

\begin{proof}
Assume that \(\divi(h)=e=1\). Consider an abelian surface \(A\) of degree \(2d\), i.e. having a polarization \(h_A\) of degree \(2d\). The image of \(h_A\) via the primitive inclusion \(\HH^2(A,\ZZ)\hookrightarrow \HH^2(\K_{(2,0,-2)}(A), \ZZ)\) \cite[Theorem 3.5.1]{rapagnettaOG6} gives an effective divisor of square \(2d\) and divisibility \(1\) on the \(\OG_6^s\) moduli space \(\K_{(2,0,-2)}(A)\).

For the case \(\divi(h)=e=2\), we fix an abelian surface \(A\) of degree 2, i.e. having a polarization \(h_A\) of degree 2. By abuse of notation we call \(h_A\) the image of it via the inclusion \(\mu:\HH^2(A,\ZZ)\hookrightarrow \HH^2(\K_{(2,0,-2)}(A), \ZZ)\). Let \(B\subseteq\K_{(2,0,-2)}(A)\) be the Weil effective divisor parametrizing non-locally free sheaves and call \(b:=\frac{1}{2}c_1(2B)\). As consequence of \cite[Theorem 2.4 and Theorem 2.5]{PR18} we have:
\[
\HH^2(\K_{(2,0,-2)}(A),\ZZ)=\mu( \HH^2(A,\ZZ))\oplus_{\perp} \ZZ \cdot b
\]
and the class \(b\) has square -2. Taking \(A\) general we can assume that \(\NS(\K_{(2,0,-2)}(A))=\mu(\NS(A))\oplus_\perp \ZZ \cdot b\) having BBF-matrix 
\[\begin{pmatrix}
h_A^2 & h_A\cdot b \\
h_A\cdot b & b^2
\end{pmatrix}=\begin{pmatrix}
2 & 0 \\
0 & -2
\end{pmatrix}.
\]
For any odd integer \(n\ge 1\) we consider the class
\[
\alpha_{n}:=(n+1)h_A+n b\in\HH^2(\K_{(2,0,-2)},\ZZ) 
\] 
and observe that by construction \(2\alpha_n\) is an effective class. Using the above written decomposition of the lattice \(\HH^2(\K_{(2,0,-2)}(A),\ZZ)\) a straightforward computation gives that \(\alpha_n\) has divisibility 2 in it. Writing \(n=2l+1\) and \(q_{\K_{(2,0,-2)}(A)}(\alpha_n)=2d\) we have 
\[
d=2n+1=4l+3\equiv 3\mod 4
\]
and varying \(l\in \NN\) (hence \(n\ge 1\) odd integer) we obtain all possible positive integers of this form. By \autoref{lemma_ type_d} such \(2d\) are the only possible square for a class of divisibility 2 on an \(\OG^s_6\)-type primitive symplectic variety, hence the statement is proved.
\end{proof}

We are finally ready to prove our main theorem.

\begin{proof}[Proof of \autoref{main thm}]
For any \((X,h)\) as in the statement consider a pair \((\K_v,\alpha)\) given by \autoref{prop invariants of classes on Kv}, i.e. where \(\K_v\) is a \(\OG_6^s\) moduli space and \(\alpha\) is a effective class on \(\K_v\), having the same square and divisibility of \(h\). Furthermore, \(\varepsilon\alpha\) is an effective class, where \(\varepsilon=1\) if \(\divi(\alpha)=1\) and \(\varepsilon=2\) if \(\divi(\alpha)=2\). By \autoref{rem_pos2ample}, there exists a polarized locally trivial deformation \((X',h')\)  of  \((\K_v,\varepsilon\alpha)\) such that \(h'\) is an ample class, with the same square and divisibility of \(\varepsilon\alpha\). In conclusion, we have obtained a polarized primitive symplectic variety \((X',h')\) having the same numerical invariants of \((X,\varepsilon h)\) and that is a polarized locally trivial deformation of \((\K_v,\varepsilon\alpha)\).

By \autoref{cor monodromy} we have that \((X,\varepsilon h)\) and \((X',h')\) are in the same connected component of the moduli space \(\mathfrak M_{\OG_6^s}\), hence we obtain one from the other with a polarized locally trivial deformation; we conclude that  \((X,\varepsilon h)\)  is also a polarized locally trivial deformation of  \((\K_v,\varepsilon\alpha)\). By \autoref{thm_positive} there exists a positive integer \(m'\) such that the class \(m'\varepsilon\alpha\) is the first Chern class of an irreducible uniruled divisor. We can finally apply \cite[Theorem 1.1]{LMP}, concluding that there exists a positive integer \(k\) such that also the class \(km' \varepsilon h\) is the first Chern class of a uniruled divisors, hence we have the statement for \(m=km'\varepsilon\).
\end{proof}

\subsection{The smooth case}\label{section smooth case}
It is natural to ask if our methods to prove \autoref{main thm} can be applied to obtain results about the existence of ample uniruled divisors on \textit{smooth} \(\OG_6\)-type ihs manifold. Also the smooth setting the monodromy group is maximal \cite[Theorem 5.4]{mongardi.rapagn.monOG6}, hence the same result as in \autoref{cor monodromy} holds true for the moduli space of polarized ihs manifolds of \(\OG_6\)-type. Following the proof of \autoref{main thm}, what one needs to show is the existence of uniruled representatives in each positive effective class on a \(\OG_6\) moduli space \(\widetilde \K_v\) in analogy with \autoref{thm_positive}, combined with the fact that positive and (up to multiple) effective classes on \(\widetilde \K_v\) cover all possible squares and divisibilities of polarizations on a \(\OG_6\)-type ihs manifold, as in \autoref{prop invariants of classes on Kv} for the singular case. This last point is indeed true, as we show in the following.

\begin{proposition}
Let \((X,h)\) be a polarized ihs manifold of \(\OG_6\)-type, with BBF-form \(q_x\); we call \(q_X(h)=2d\) and \(\divi(h)=e\) in \(\HH^2(X,\ZZ)\). Then there exists a \(\OG_6\) moduli space \(\widetilde \K_v\) and a  class \(\alpha\in \HH^2(\widetilde\K_v, \ZZ)\) such that \(q_{\widetilde\K_v}(\alpha)=2d\) and \(\divi(\alpha)=e\) in \(\HH^2(\widetilde\K_v, \ZZ)\), where \(q_{\widetilde\K_v}\) is the BBF-form of \(\widetilde\K_v\). Furthermore, \(\alpha\) or \(2\alpha\) is an effective class.
\end{proposition}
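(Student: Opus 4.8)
The plan is to run the proof of \autoref{prop invariants of classes on Kv} again, replacing each \(\OG_6^s\) moduli space \(\K_v\) by its symplectic resolution \(f_v\colon\widetilde\K_v=\Bl_{\Sigma_v}\K_v\to\K_v\) of \autoref{thm.K_v} \cite{lehnsorger}, and carrying along the one extra \((-2)\)-class produced by the exceptional divisor. Write \(\widetilde\Sigma_v\) for the exceptional divisor of \(f_v\); by \cite[Theorem 3.3.1, Theorem 3.5.1]{rapagnettaOG6} it is divisible by \(2\), the class \(\widetilde\sigma_v:=\tfrac12\widetilde\Sigma_v\) has BBF-square \(-2\), and
\[
\HH^2(\widetilde\K_v,\ZZ)\ \cong\ f_v^*\HH^2(\K_v,\ZZ)\ \oplus_\perp\ \ZZ\cdot\widetilde\sigma_v\ \cong\ \bU^{\oplus 3}\oplus[-2]^{\oplus 2}=:\bL_{\OG_6},
\]
the first decomposition being orthogonal for the BBF-form. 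By \autoref{rem BBF and Fujiki invariant up to resolution} the map \(f_v^*\) is an isometry onto the orthogonal summand \(f_v^*\HH^2(\K_v,\ZZ)\); hence for \(\beta\in\HH^2(\K_v,\ZZ)\) its BBF-square and its divisibility are the same computed in \(\HH^2(\K_v,\ZZ)\) or in \(\HH^2(\widetilde\K_v,\ZZ)\), and \(f_v^*\) carries effective classes to effective classes. Since \(\bA_{\OG_6}\cong(\ZZ/2\ZZ)^2\) the divisibility \(e\) is \(1\) or \(2\); arguing exactly as in \autoref{lemma_ type_d}, the discriminant quadratic form takes the values \(\tfrac32,\tfrac32,1\in\QQ/2\ZZ\) on the three nonzero classes (the classes of the two \((-2)\)-generators and their sum), so \(\divi(h)=2\) forces \(d\equiv 3\) or \(d\equiv 2\mod 4\), and both cases occur.

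For \(e=1\) I would argue exactly as in \autoref{prop invariants of classes on Kv}: for an abelian surface \(A\) with a primitive polarization \(h_A\) of square \(2d\), the image of \(h_A\) under \(\HH^2(A,\ZZ)\hookrightarrow\HH^2(\K_{(2,0,-2)}(A),\ZZ)\) \cite[Theorem 3.5.1]{rapagnettaOG6} followed by \(f_{(2,0,-2)}^*\) is an effective class of square \(2d\) on the \(\OG_6\) moduli space \(\widetilde\K_{(2,0,-2)}(A)\), of divisibility \(1\) because \(\HH^2(A,\ZZ)\cong\bU^{\oplus 3}\) is unimodular and sits in \(\bL_{\OG_6}\) as an orthogonal direct summand. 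For \(e=2\) with \(d\equiv 3\mod 4\) I would take \(A\) general with \(h_A^2=2\), let \(b\in\HH^2(\K_{(2,0,-2)}(A),\ZZ)\) be the \((-2)\)-class of \cite[Theorem 2.4, Theorem 2.5]{PR18} with \(2b=c_1(2B)\) (\(B\) the Weil divisor of non--locally free sheaves) effective, and push forward by \(f_{(2,0,-2)}^*\) the classes \(\alpha_n=(n+1)h_A+nb\) of \autoref{prop invariants of classes on Kv}: they stay in the orthogonal summand \(f_{(2,0,-2)}^*\HH^2(\K_6,\ZZ)\), so \(q(\alpha_n)=2(2n+1)\) and \(\divi(\alpha_n)=2\) are unchanged, \(2\alpha_n=2(n+1)h_A+n(2b)\) is effective, and letting \(n=2l+1\) run over odd positive integers realizes every \(d\equiv 3\mod 4\).

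The case \(e=2\), \(d\equiv 2\mod 4\) is the genuinely new one, and is where \(\widetilde\sigma_v\) is needed. Take \(A\) sufficiently general with \(h_A^2=2\), so that \(\NS(\K_6)=\ZZ h_A\oplus_\perp\ZZ b\) and hence \(\NS(\widetilde\K_{(2,0,-2)}(A))=\ZZ h_A\oplus_\perp\ZZ b\oplus_\perp\ZZ\widetilde\sigma\) with Gram matrix \(\mathrm{diag}(2,-2,-2)\), where \(\widetilde\sigma:=\widetilde\sigma_{(2,0,-2)}\). Writing \(d=4m+2\), set
\[
\alpha:=(2m+2)\,h_A+(2m+1)\,b+\widetilde\sigma\ \in\ \HH^2(\widetilde\K_{(2,0,-2)}(A),\ZZ).
\]
Then a direct computation gives \(q(\alpha)=2\big[(2m+2)^2-(2m+1)^2-1\big]=2d\); \(\alpha\) is primitive since \(\gcd(2m+2,2m+1,1)=1\); every pairing \((\alpha,w)\) with \(w\in\HH^2(\widetilde\K_6,\ZZ)\) is even because the coefficient of \(h_A\) is even and \(b^2=\widetilde\sigma^2=-2\), so \(\divi(\alpha)\neq 1\), while \((\alpha,\widetilde\sigma)=-2\) gives \(\divi(\alpha)\mid 2\); hence \(\divi(\alpha)=2\) (with \([\tfrac\alpha2]=[\tfrac b2]+[\tfrac{\widetilde\sigma}{2}]\neq 0\) in \(\bA_{\OG_6}\), the class of square \(1\), consistent with \(d\equiv2\mod4\)). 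Finally \(2\alpha=(4m+4)h_A+(2m+1)(2b)+\widetilde\Sigma\) is a sum of effective divisors, hence effective, and letting \(m\ge 0\) vary realizes all \(d\equiv 2\mod 4\). Together with the previous two cases this covers every \((d,e)\) that can occur, which is what we want.

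Beyond transcribing \autoref{prop invariants of classes on Kv}, the only real content is the lattice bookkeeping for the extra \((-2)\)-summand: one must use that \(\widetilde\Sigma_v\) is effective with \(\widetilde\sigma_v=\tfrac12\widetilde\Sigma_v\) of square \(-2\) and BBF-orthogonal to \(f_v^*\HH^2(\K_v,\ZZ)\) (all from \cite{rapagnettaOG6}), and one must notice that, \(\bA_{\OG_6}\) no longer being cyclic, divisibility-\(2\) polarizations now split into the two residues \(d\equiv 2\) and \(d\equiv 3\mod 4\) rather than just \(d\equiv 3\mod 4\) — the new residue being exactly the one realized with the help of \(\widetilde\sigma_v\). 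I do not expect a serious obstacle.
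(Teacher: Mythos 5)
Your proposal is correct and follows essentially the same route as the paper: the same discriminant-form computation showing that divisibility $2$ forces $d\equiv 2,3\bmod 4$, pullback along the resolution from \autoref{prop invariants of classes on Kv} for divisibility $1$ and for divisibility $2$ with $d\equiv 3\bmod 4$, and an explicit class involving half the exceptional divisor for the remaining case. In fact, under the identification $f_v^*b=\widetilde b+a$ coming from Rapagnetta's decomposition, your class $\alpha=(2m+2)h_A+(2m+1)b+\widetilde\sigma$ is exactly the paper's class $\beta_n=(n+1)h_A+n(\widetilde b+a)+a$ with $n=2m+1$, so the two arguments coincide up to the choice of basis.
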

\begin{proof}
The proof follows the same steps as the proof of \autoref{prop invariants of classes on Kv}. As first, we prove an analogue of \autoref{lemma_ type_d}. By \cite[Theorem 3.5.1]{rapagnettaOG6} the lattice of an \(\OG_6\)-type ihs manifold is isomorphic to the lattice
\[
\bL_{\OG_6}:=\bU^{\oplus 3}\oplus[-2]\oplus[-2].
\]
We call \(\sigma_1\) and \(\sigma_2\) the generators of square \(-2\) of the rank-one lattices in the decomposition above, and \(\bA_{\OG_6}\) the discriminant group of \(\bL_{\OG_6}\). Hence \(\bA_{\OG_6}\cong \ZZ/2\ZZ\oplus \ZZ/2\ZZ\) is generated by \([\frac{\sigma_1}{\divi(\sigma_1)}]\) and \([\frac{\sigma_2}{\divi(\sigma_2)}]\), and its BBF-form is given in this basis by the matrix
\[
[q_{\bA_{\OG_6}}]=  \begin{pmatrix}
\frac{3}{2} & 0 \\
0 & \frac{3}{2}
\end{pmatrix}\in \M_{2,2}(\QQ/2\ZZ).
\]
Let \(h\in \bL_{\OG_6}\) be an element of divisibility equal to 2, and call \(q_{\bL_{\OG_6}}(h)=2d\). Then \(d\equiv 2,3 \mod 4\): possible squares in \(\bA_{\OG_6}\) are \(\{0,1,\frac{3}{2}\}\subseteq \QQ/2\ZZ\), hence arguing as in the proof of \autoref{lemma_ type_d} we can exclude this time only that \(d\equiv 0,1 \mod 4\), getting our assertion. 

The class \(\alpha\) as in the statement is obtained in the case of \(\divi(\alpha)=1\) or \(\divi(\alpha)=2\) and \(d\equiv 3 \mod 4\) using the inclusion of lattices \(\widetilde\pi^*:\HH^2(\K_v,\ZZ)\hookrightarrow \HH^2(\widetilde\K_v, \ZZ)\) (see \autoref{rem BBF and Fujiki invariant up to resolution}) and then \autoref{prop invariants of classes on Kv}. The only remaining case is the one of divisibility 2 and \(d\equiv 2 \mod 4\), and we proceed as follows. Again by \cite[Theorem 3.5.1]{rapagnettaOG6} we have
\[
\HH^2(\widetilde\K_{(2,0,-2)}, \ZZ)= \mu(\HH^2(A,\ZZ))\oplus_\perp \ZZ\cdot (\widetilde b+a) \oplus_\perp \ZZ\cdot a
\]
where \(\mu\) is an inclusion of lattices, \(a\) is the first Chern class of half of the exceptional divisor of the resolution \(\widetilde\pi:\widetilde\K_{(2,0,-2)}\to\K_{(2,0,-2)}\) and \(\widetilde b\) is the first Chern class of the strict transform via \(\widetilde\pi\) of the divisor of non-locally free sheaves in \(\K_{(2,0,-2)}\). Furthermore, this decomposition gives the isomorphism with the lattice \(\bL_{\OG_6}\), respecting the direct sum decompositions of the lattices as written above.
Let \(A\) be an abelian surface of degree 2 and let \(h_A\in \NS(\widetilde \K_{(2,0,-2)}(A))\) be the image of this polarization via \(\mu\). For any odd integer \(n\ge 1\) we finally consider the class
\[
\beta_n:=(n+1)h_A+n(\widetilde b+a)+a
\]
and observe that \(2\beta_n\) is effective; a straightforward computation gives \(\divi(\beta_n)=2\). Writing \(n=2l+1\) and \(q_{\widetilde\K_{(2,0,-2)}}(\beta_n)=2d\) we have 
\[
d=2n=4l+2\equiv 2 \mod 3
\]
and varying \(l\in\NN\) we obtain all possible positive integer of this form, hence the statement is proved.
\end{proof}

To conclude, we show why our method to prove \autoref{thm_positive} breaks down in the smooth case, going over the steps of its proof in the singular setting. Our starting point has been the regular morphism \(\Phi_v:\underline Y_v\to \K_v\) defined in \eqref{eq.Phi}, that we used to pullback an effective divisor \(D\) on \(\K_v\) to obtain that it has a positive multiple linearly equivalent to an irreducible uniruled divisor. The irreducibility of the divisor is crucial, as the ruling curve needs to be reduced and irreducible to apply the deformation result \cite[Theorem 1.1]{LMP} in the proof of \autoref{main thm}. The uniruled divisor in \(\K_v\) is obtained as  image of a uniruled divisor on \(\underline Y_v\), whose existence is ensured by \cite[Theorem 1.1]{CPM} but whose explicit definition is not given. A natural way to transfer this argument to the smooth setting is to consider the rational map
\[
\widetilde\Phi_v:\underline Y_v\dashrightarrow \widetilde \K_v
\]
given by the composition of \(\Phi_v\) with the inverse of the birational morphism \(\widetilde\pi:\widetilde\K_v\to\K_v\), hence in other words, to pullback to \(\widetilde\K_v\) the uniruled divisor on \(\K_v\) via the resolution \(\widetilde\pi\). The problem of this strategy is that we have no control on the irreducibility of the resulting divisors: a pulled-back divisor could contain \(m\) copies of the (uniruled) exceptional divisor \(\widetilde\Sigma_v\subseteq\widetilde \K_v\), as a uniruled divisor on \(\K_v\) could contain the singular locus \(\Sigma_v\subseteq\K_v\). Since we have no explicit description of the uniruled divisors on \(\underline Y_v\), and hence on \(\K_v\), we can not control whether the uniruled divisors on \(\K_v\) contain \(\Sigma_v\) or not. If we knew the number \(m\) of copies of \(\widetilde\Sigma_v\) in the pullback of a uniruled divisor, we could produce a new irreducible uniruled divisor (the one obtained subtracting \(m\) copies of \(\widetilde\Sigma_v\) from the pullback). Anyway in order to compute the lattice invariants of such divisor we need to know its cohomology class, i.e. the integer \(m\), as we know the lattice invariants of the pulled-back divisors. 

We also remark that in our case we can not use \cite[Proposition 4.5]{LMP}, as it only holds true for singular moduli spaces with terminal singularities, which is not the case for \(\K_v\), see \cite[\S 1.1]{PR18}. 


\bibliography{Biblio}

\begin{thebibliography}{CMP19}

\bibitem[Bea83]{beauville.varietes}
Arnaud Beauville.
\newblock Vari{\'e}t{\'e}s {K}{\"a}hleriennes dont la première classe de
  {C}hern est nulle.
\newblock {\em Journal of Differential Geometry}, 18(4):755--782, 1983.

\bibitem[Bea00]{beauville.symplectic}
Arnaud Beauville.
\newblock Symplectic singularities.
\newblock {\em Invent. Math.}, 139(3):541--549, 2000.

\bibitem[Ber21]{bertini2021rational}
Valeria Bertini.
\newblock Rational curves on o'grady's tenfolds.
\newblock {\em arXiv preprint arXiv:2101.01029}, 2021.

\bibitem[BGL22]{BGL22}
Benjamin Bakker, Henri Guenancia, and Christian Lehn.
\newblock Algebraic approximation and the decomposition theorem for k{\"a}hler
  calabi--yau varieties.
\newblock {\em Inventiones mathematicae}, pages 1--54, 2022.

\bibitem[BL18]{BL18}
Benjamin Bakker and Christian Lehn.
\newblock The global moduli theory of symplectic varieties.
\newblock {\em arXiv preprint arXiv:1812.09748}, 2018.

\bibitem[CMP19]{CPM}
Fran{\c{c}}ois Charles, Giovanni Mongardi, and Gianluca Pacienza.
\newblock Families of rational curves on holomorphic symplectic varieties and
  applications to zero-cycles.
\newblock {\em arXiv preprint arXiv:1907.10970, to appear in Compositio Math.},
  2019.

\bibitem[Fuj87]{fujiki}
Akira Fujiki.
\newblock On the de {R}ham cohomology group of a compact {K}{\"a}hler
  symplectic manifold.
\newblock In {\em Algebraic Geometry, Sendai, 1985}, pages 105--165.
  Mathematical Society of Japan, 1987.

\bibitem[GHS09]{GHS_abel}
Valery Gritsenko, Klaus Hulek, and Gregory Sankaran.
\newblock Abelianisation of orthogonal groups and the fundamental group of
  modular varieties.
\newblock {\em J. Algebra}, 322(2):463--478, 2009.

\bibitem[GKP16]{greb2016singular}
Daniel Greb, Stefan Kebekus, and Thomas Peternell.
\newblock Singular spaces with trivial canonical class.
\newblock In {\em Minimal models and extremal rays (Kyoto, 2011)}, pages
  67--113. Mathematical Society of Japan, 2016.

\bibitem[HP19]{horing2019algebraic}
Andreas H{\"o}ring and Thomas Peternell.
\newblock Algebraic integrability of foliations with numerically trivial
  canonical bundle.
\newblock {\em Inventiones mathematicae}, 216(2):395--419, 2019.

\bibitem[Huy99]{huybrechts}
Daniel Huybrechts.
\newblock Compact hyperk{\"a}hler manifolds: basic results.
\newblock {\em Inventiones mathematicae}, 135(1):63--113, 1999.

\bibitem[Kir15]{kirschner2015}
Tim Kirschner.
\newblock {\em Period mappings with applications to symplectic complex spaces},
  volume 2140.
\newblock Springer, 2015.

\bibitem[LMP21]{LMP}
Christian Lehn, Giovanni Mongardi, and Gianluca Pacienza.
\newblock Deformations of rational curves on primitive symplectic varieties and
  applications.
\newblock {\em arXiv preprint arXiv:2103.16356, to appear in Algebraic
  Geometry}, 2021.

\bibitem[LS06]{lehnsorger}
Manfred Lehn and Christoph Sorger.
\newblock La singularit{\'e} de {O}'{G}rady.
\newblock {\em Journal of Algebraic Geometry}, 15, 10 2006.

\bibitem[Mat15]{matsushita15}
Daisuke Matsushita.
\newblock On base manifolds of lagrangian fibrations.
\newblock {\em Science China Mathematics}, 58(3):531--542, 2015.

\bibitem[MP17]{mongardi.pacienza}
Giovanni Mongardi and Gianluca Pacienza.
\newblock Polarized parallel transport and uniruled divisors on deformations of
  generalized {K}ummer varieties.
\newblock {\em International Mathematics Research Notices},
  2018(11):3606--3620, 2017.

\bibitem[MP19]{mongardi2019corrigendum}
Giovanni Mongardi and Gianluca Pacienza.
\newblock Corrigendum and {A}ddendum to “{P}olarized parallel transport and
  uniruled divisors on generalized {K}ummer varieties”.
\newblock {\em International Mathematics Research Notices}, 2019.

\bibitem[MR19]{mongardi.rapagn.monOG6}
Giovanni Mongardi and Antonio Rapagnetta.
\newblock Monodromy and birational geometry of {O}'{G}rady's sixfolds.
\newblock {\em arXiv preprint arXiv:1909.07173}, 2019.

\bibitem[MRS18]{mong.sac.rapagn}
Giovanni Mongardi, Antonio Rapagnetta, and Giulia Saccà.
\newblock The hodge diamond of o’grady’s six-dimensional example.
\newblock {\em Compositio Mathematica}, 154(5):984–1013, 2018.

\bibitem[Nam01]{namikawa01}
Yoshinori Namikawa.
\newblock Extension of 2-forms and symplectic varieties.
\newblock {\em J. Reine Angew. Math.}, 539:123--147, 2001.

\bibitem[O'G99]{OG10}
Kieran O'Grady.
\newblock Desingularized moduli spaces of sheaves on a {K}3.
\newblock {\em J. Reine Angew. Math.}, 512(512):49--117, 1999.

\bibitem[O'G03]{OG6}
Kieran O'Grady.
\newblock A new six dimensional irreducible symplectic variety.
\newblock {\em J. Algebraic Geometry}, 12(3):435--505, 2003.

\bibitem[PR13]{PR:OG10}
Arvid Perego and Antonio Rapagnetta.
\newblock Deformation of the {O}'{G}rady moduli spaces.
\newblock {\em Journal f{\"u}r die reine und angewandte Mathematik (Crelles
  Journal)}, 2013(678):1--34, 2013.

\bibitem[PR18]{PR18}
Arvid Perego and Antonio Rapagnetta.
\newblock The moduli spaces of sheaves on k3 surfaces are irreducible
  symplectic varieties.
\newblock {\em arXiv preprint arXiv:1802.01182}, 2018.

\bibitem[PR20]{Per_Rap_the_second_integral}
Arvid Perego and Antonio Rapagnetta.
\newblock The second integral cohomology of moduli spaces of sheaves on k3 and
  abelian surfaces, 2020.

\bibitem[Rap07]{rapagnettaOG6}
Antonio Rapagnetta.
\newblock Topological invariants of {O}’{G}rady’s six dimensional
  irreducible symplectic variety.
\newblock {\em Mathematische Zeitschrift}, 256(1):1--34, 2007.

\bibitem[Sch20a]{Schwald_Fujiki}
Martin Schwald.
\newblock Fujiki relations and fibrations of irreducible symplectic varieties.
\newblock {\em \'{E}pijournal G\'{e}om. Alg\'{e}brique}, 4:Art. 7, 19, 2020.

\bibitem[Sch20b]{schwald20}
Martin Schwald.
\newblock On the definition of irreducible holomorphic symplectic manifolds and
  their singular analogs.
\newblock {\em arXiv preprint arXiv:2003.06004}, 2020.

\bibitem[Voi16]{voisin.remarks}
Claire Voisin.
\newblock Remarks and questions on coisotropic subvarieties and 0-cycles of
  hyper-{K}{\"a}hler varieties.
\newblock In {\em K3 surfaces and their moduli}, pages 365--399. Springer,
  2016.

\end{thebibliography}
\bibliographystyle{alpha}


\end{document}